\title{Geodesic nets via eigenvalue optimisation}
\author{Duc Hoang Cao}
\date{}                
\address{Department of Mathematics, King's College London, Strand, London, WC2R 2LS, UK}
\email{hoang.cao@kcl.ac.uk}
\begin{document}
\begin{abstract}
We explore a connection between geodesic nets and quantum graphs optimising certain functionals from spectral theory. For surfaces, critical metrics for the normalised $k^{\mathrm{th}}$ eigenvalue of the Laplacian give rise to isometric minimal immersions to a unit sphere. In this spirit we obtain geodesic nets from optimal quantum graphs, and obstructions to the existence of critical metrics.
\end{abstract}

\maketitle

\section{Introduction}
Given a discrete graph $G=(V,E)$, one can construct a quantum---also called metric---graph $(G,g)$ by assigning a length $\ell_e:=\sqrt{g_e}$ on each edge $e\in E$ and viewing $e$ as an interval $[0,\ell_e]$. We define the Laplacian on $G$ as $\Delta_g:=-\df{}^2/\df {x}^2$ and consider the eigenvalue problem with Kirchhoff--Neumann vertex conditions, 
\begin{equation}\label{classical_eigenvalue_problem}
	\begin{cases}
		\Delta_g f = \lambda f &\text{on edges};\\
		f \text{ is continuous} & \text{on } (G,g); \\
		\sum_{e \in E_v} f_e'(v) = 0 &\text{at } v \in V,
    \end{cases}
\end{equation}
where $E_v$ is the set of edges attached at $v$ and $f'_e(v)$ is the derivative of $f$ at $v$ in the direction pointing out of $v$ into the edge $e\in E_v$. If $(G,g)$ is a compact connected quantum graph, the eigenvalues are discrete and form an increasing sequence
\begin{equation}
    0=\lambda_0(G,g)<\lambda_1(G,g)\le \lambda_2(G,g)\le \cdots\le \lambda_n(G,g)\le \cdots\nearrow\infty.
\end{equation}

We define the scale-invariant normalised eigenvalues
\begin{equation}
    \ol{\lambda_k}(g):=\lambda_k(g)L(g)^2,\quad \forall k\ge 0,
\end{equation}
where $L(g)$ is the total length of $(G,g)$.

The main results of this paper are motivated by the analogous problem for surfaces: given a compact surface $M$, Nadirashvili \cite{Nadirashvili1996} and El Soufi--Ilias \cite{ELSOUFI_lambda1} proved that the maximising metric of the first positive normalised eigenvalue, $\lambda_1(M,g)\hbox{Area}(M,g)$, is induced by minimal isometric immersions into spheres, where $\lambda_1(M,g)$ is the first positive Laplacian eigenvalue, a result which was extended to higher eigenvalues in \cite{ELSOUFI200889}. Later, Karpukhin--Métras \cite{relaxing} extended these results for extremal metrics of normalised eigenvalue on closed manifolds of dimension at least three. We investigated the one-dimensional analogue in this paper.

Specifically, we are interested in connections between quantum graphs and geodesic nets. A geodesic net is a collection of points, $\mathcal{V}$, in a sphere together with a collection of geodesics, $\Gamma$, whose endpoints are in $\mathcal{V}$ and such that at every point in $\mathcal{V}$, the sum of outward unit tangent vectors of geodesics attached to that point vanishes. By Takahashi's Theorem, \cite[Theorem 3]{Takahashi}, the coordinate functions of a geodesic net are eigenfunctions of the Laplacian on the induced quantum graph. Our main theorem shows that the metric on this graph is an extremal metric for the normalised eigenvalues functional.

\begin{theorem}\label{main_theorem1}
	Let $G$ be a finite compact connected quantum graph and assume that $g_*$ is an extremal metric for the functional $\ol{\lambda_k}(g):=\lambda_k(g)L(g)^2$. Then there exist some $\lambda_k(g_*)$-eigenfunctions $f_1,f_2,\dots,f_n$ such that
	\begin{equation}\label{eq_main_theorem1}
		\sum_{i=1}^n \left[|\nabla_{g_*}f_i|_{g_*}^2 +\lambda_k(g_*) f_i^2\right]=1.
	\end{equation}
	Conversely, if there exist $\lambda_k(g)$-eigenfunctions $f_1,\dots,f_n$ satisfying  \eqref{eq_main_theorem1} and, additionally, $\lambda_k(g)>\lambda_{k-1}(g)$ or $\lambda_{k}(g)<\lambda_{k+1}(g)$, then $g$ is extremal for the functional $\ol{\lambda_k}(\cdot)$.
\end{theorem}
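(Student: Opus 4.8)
\emph{Plan.} I would follow the template of the Nadirashvili--El Soufi--Ilias argument, with Hadamard's formula for surfaces replaced by its one-dimensional analogue. The first task is to set up perturbation theory for the eigenvalues. Since inserting a degree-two Neumann vertex leaves the quantum graph unchanged, I would regard a metric, up to subdivision, as a positive density on the fixed topological graph $\Gamma$, so that an admissible deformation $g_t$ is recorded by a function $\psi$ on the edges --- the infinitesimal density change, the exact one-dimensional counterpart of varying a surface metric by a symmetric $2$-tensor; this is also why the pointwise identity \eqref{eq_main_theorem1}, rather than an edge-averaged version, should be the correct condition. Pulling everything back to a fixed graph and invoking analytic (Rellich--Kato) perturbation theory, one obtains analytic branches $(\Lambda_j(t),f_j(t))$ through $\lambda:=\lambda_k(g_*)$, with $f_j:=f_j(0)$ an $L^2(g_*)$-orthonormal basis of the eigenspace $\Lambda$ in which the first-order perturbation is diagonal. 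Differentiating the weak eigenvalue equation together with $\|f_j(t)\|_{g_t}=1$ then yields the Hadamard-type formula
\[
\dot\Lambda_j(0)=-\int_\Gamma \psi\bigl(|\nabla_{g_*}f_j|_{g_*}^2+\lambda f_j^2\bigr)\df y ,
\]
and, combined with $\tfrac{d}{dt}\big|_0 L(g_t)^2=2L(g_*)\int_\Gamma\psi\,\df y$, it shows that the first variation of $\ol{\lambda_k}$ along the $j$-th branch equals $Q_\psi(f_j)$, where
\[
Q_\psi(f)=L(g_*)^2\int_\Gamma\psi\Bigl(\tfrac{2\lambda}{L(g_*)}-|\nabla_{g_*}f|_{g_*}^2-\lambda f^2\Bigr)\df y
\]
is a quadratic form on $\Lambda$. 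The point to record is that the one-sided derivatives $\tfrac{d}{dt}\big|_{0^{\pm}}\ol{\lambda_k}(g_t)$ are order statistics of the spectrum of $Q_\psi$, so they always lie between $\min_{\|f\|=1}Q_\psi(f)$ and $\max_{\|f\|=1}Q_\psi(f)$, and that when $\lambda_k(g_*)\ne\lambda_{k\pm1}(g_*)$ the eigenvalue $\lambda_k$ is the bottom, respectively top, of its cluster, so these one-sided derivatives are \emph{exactly} $\min Q_\psi$ and $\max Q_\psi$, in one order or the other.

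\emph{Converse direction.} Here I would argue directly. Given $\lambda$-eigenfunctions $f_1,\dots,f_n$ with $\sum_i(|\nabla_{g_*}f_i|_{g_*}^2+\lambda f_i^2)=1$ on $\Gamma$, integrating over $\Gamma$ gives $\sum_i 2\lambda\|f_i\|^2=L(g_*)$, i.e.\ $\sum_i\|f_i\|^2=L(g_*)/(2\lambda)$. Substituting both identities into $Q_\psi$ for an arbitrary $\psi$ produces
\[
\sum_{i}\|f_i\|^2\,Q_\psi\!\bigl(f_i/\|f_i\|\bigr)=L(g_*)^2\int_\Gamma\psi\Bigl(\tfrac{2\lambda}{L(g_*)}\textstyle\sum_i\|f_i\|^2-\sum_i(|\nabla_{g_*}f_i|_{g_*}^2+\lambda f_i^2)\Bigr)\df y=0 .
\]
Since the weights $\|f_i\|^2$ are nonnegative and not all zero, $Q_\psi$ must take a value $\le 0$ and a value $\ge 0$ on the unit sphere of $\Lambda$, so $\min Q_\psi\le 0\le\max Q_\psi$. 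Using the hypothesis $\lambda_k(g_*)\ne\lambda_{k\pm1}(g_*)$, this says that $\tfrac{d}{dt}\big|_{0^+}\ol{\lambda_k}(g_t)$ and $\tfrac{d}{dt}\big|_{0^-}\ol{\lambda_k}(g_t)$ have opposite signs (or vanish) for every deformation, which is exactly extremality of $g_*$.

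\emph{Forward direction.} Assuming $g_*$ extremal, I would run a Hahn--Banach separation. For unit $f\in\Lambda$ set $v_f:=|\nabla_{g_*}f|_{g_*}^2+\lambda f^2-\tfrac{2\lambda}{L(g_*)}$, so $Q_\psi(f)=-L(g_*)^2\int_\Gamma\psi\,v_f\,\df y$. Extremality, together with the fact that every one-sided derivative of $\ol{\lambda_k}(g_t)$ lies between $\min Q_\psi$ and $\max Q_\psi$, forces $\min Q_\psi\le 0\le\max Q_\psi$ for every $\psi$; equivalently, no deformation direction makes $\int_\Gamma\psi\,v_f\,\df y$ of one fixed sign over all unit $f\in\Lambda$. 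Since $v_f$ depends quadratically on $f$, the $v_f$ span a finite-dimensional subspace $W\subset L^1(\Gamma)$ and $\{v_f:\|f\|=1\}$ is compact there; its convex hull is thus closed, and the non-separation just established forces $0$ into it. Carathéodory's theorem then gives $0=\sum_i t_i v_{f_i}$ for finitely many unit $f_i\in\Lambda$ with $t_i\ge 0$, $\sum_i t_i=1$, i.e.\ $\sum_i t_i(|\nabla_{g_*}f_i|_{g_*}^2+\lambda f_i^2)=\tfrac{2\lambda}{L(g_*)}$; replacing each $f_i$ by $\sqrt{t_i L(g_*)/(2\lambda)}\,f_i$ and dropping the terms with $t_i=0$ yields eigenfunctions satisfying \eqref{eq_main_theorem1}.

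\emph{Main obstacle.} I expect the analytic foundation of the first paragraph to be the hard part rather than the convexity: one must make rigorous the passage from edgewise length variations to arbitrary densities (working modulo subdivision, so that the pointwise identity is the right criterion), handle the fact that the underlying metric space --- hence the $L^2$ space --- moves with $t$ (by pulling back to a fixed graph), and compute the one-sided derivatives of $\lambda_k(g_t)$ correctly when $\lambda_k(g_*)$ has multiplicity. This last point is precisely where the hypothesis $\lambda_k(g_*)\ne\lambda_{k\pm1}(g_*)$ enters the converse: only then is $\lambda_k$ at the extreme of its eigenvalue cluster, so that ``$Q_\psi$ takes both signs'' upgrades to ``the one-sided derivatives of $\ol{\lambda_k}$ straddle $0$''; the forward implication needs no such hypothesis because it only uses that an order statistic lies between $\min Q_\psi$ and $\max Q_\psi$.
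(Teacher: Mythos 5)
Your overall architecture is the right one and is essentially the one the paper follows (a first-variation formula for $\lambda_k$, the observation that both one-sided derivatives of $\ol{\lambda_k}$ are pinched between $\min Q_\psi$ and $\max Q_\psi$ on the eigenspace, and a separation/convexity argument to place $0$ in the convex hull of the $v_f$); the differences are technical rather than structural. You use Rellich--Kato analytic branches where the paper establishes Lipschitz continuity of $t\mapsto\lambda_k(t)$ via projections onto lower eigenspaces and works with almost-everywhere derivatives, and you run Carath\'eodory in the finite-dimensional span of the $v_f$ where the paper invokes the geometric Hahn--Banach theorem in $L^2\times L^2$. Both of those substitutions are legitimate here (for the converse you do still need to reduce an arbitrary smooth family to the linearised, hence analytic, one --- e.g.\ via local Lipschitz dependence of $\lambda_k$ on $g\in\mbb{R}^{|E|}_+$ --- which you correctly flag as the analytic work to be done). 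Your converse and the scaling bookkeeping are correct.

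The one step that would fail as written is the subdivision argument at the start, which you then lean on to claim that the \emph{pointwise} identity \eqref{eq_main_theorem1} is the right conclusion. In Theorem \ref{main_theorem1} a metric is a point of $\mbb{R}^{|E|}_+$, so the admissible variations $\psi$ are constant on each edge. Passing to a subdivision of $G$ enlarges the class of competitors: extremality of $g_*$ on $G$ does \emph{not} imply extremality of the induced metric on the subdivided graph, so you may not test against piecewise-constant densities on finer partitions. Consequently your separation argument only yields $\int_e v_f=0$ edge by edge, i.e.\ the edge-averaged version of \eqref{eq_main_theorem1}, unless you add the missing observation: for any $\lambda$-eigenfunction $f$ one has $-f''=\lambda f$ on each edge, hence
\begin{equation}
\de{}{x_{g_*}}\left(|\nabla_{g_*}f|_{g_*}^2+\lambda f^2\right)=2\,\nabla_{g_*}f\cdot\left(-\Delta_{g_*}f+\lambda f\right)=0,
\end{equation}
so $|\nabla_{g_*}f|_{g_*}^2+\lambda f^2$ is automatically constant on every edge, and the edge-averaged identity upgrades to the pointwise one. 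With that remark in place (and the subdivision device discarded), both directions of your argument go through; without it, the forward direction proves the theorem only under the strictly stronger hypothesis of extremality with respect to all subdivided metrics.
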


Unlike higher-dimensional analogues, extremal metrics of graphs may not always be induced by isometric minimal immersions into spheres. To rectify this, we follow ideas from \cite{relaxing} by adding a density function to the eigenvalue problem and consider the extremal problem in the space of metrics and smooth density functions. Although the number of extremal points decreases after adding density functions, these extremal points satisfy stronger conditions, so that we expect more properties from the eigenfunctions.

Let $\rho:(G,g)\to\mbb{R}_+$ be smooth on edges, and consider the eigenvalue problem
\begin{equation}\label{density_eigenvalue_problem}
	\begin{cases}
		\Delta_g f = \lambda \rho f &\text{on edges};\\
		f \text{ is continuous} & \text{on } (G,g); \\
		\sum_{e \in E_v} f_e'(v) = 0 &\text{at } v \in V.
    \end{cases}
\end{equation}

If $(G,g)$ is compact and connected, the eigenvalues of \eqref{density_eigenvalue_problem} are discrete and also form a sequence
\begin{equation}
    0=\lambda_0(g,\rho)<\lambda_1(g,\rho)\le \lambda_2(g,\rho)\le \cdots\le \lambda_n(g,\rho)\le \cdots\nearrow\infty.
\end{equation}

We define the naturally normalised eigenvalues as follows:
\begin{equation}\label{natural_normalisation}
    \ol{\lambda_k}(g,\rho):=\lambda_k(g,\rho)L(g)\int_{(G,g)}\rho\df x,\quad \forall k\ge 0.
\end{equation}

Then, $\ol{\lambda_k}(g,\rho)$ is invariant under rescaling. For a discussion of the naturality of this normalisation, see \cite[Section 4]{MR4311579}. By considering density functions, we improve Theorem \ref{main_theorem1} as follows.

\begin{theorem}\label{main_theorem2}
	Let $G$ be a finite compact connected graph and suppose that $(g_*,\rho_*)$ is an extremal pair for the functional $\ol{\lambda_k}(g,\rho)$. Then $\rho_*$ is a constant function and there exist some $\lambda_k(g_*,\rho_*)$-eigenfunctions $f_1,f_2,\dots,f_n$ such that
	\begin{equation}\label{eq_main_2}
		\sum_{i=1}^n|\nabla_{g_*}f_i|_{g_*}^2=1,\quad \lambda_k(g_*,\rho_*)\sum_{i=1}^n f_i^2=\frac{L(g_*)}{\int_{(G,g_*)} \rho_* \df x}.
	\end{equation}
	Conversely, if there exist $\lambda_k(g,\rho)$-eigenfunctions $f_1,\dots,f_n$ satisfying \eqref{eq_main_2} and, additionally, $\lambda_k(g,\rho)>\lambda_{k-1}(g,\rho)$ or $\lambda_{k}(g,\rho)<\lambda_{k+1}(g,\rho)$, then $(g,\rho)$ is extremal for $\ol{\lambda_k}(\cdot,\cdot)$.
\end{theorem}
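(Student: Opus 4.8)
The plan is to run the variational argument of Nadirashvili and El Soufi--Ilias — in the form already used for Theorem \ref{main_theorem1} — in the enlarged configuration space of pairs $(g,\rho)$, the extra room afforded by the density being exactly what was exploited in \cite{relaxing}. As a first step, for a deformation $(g_t,\rho_t)$ of $(g_*,\rho_*)$ (real-analytic in $t$, which is enough) I parametrise each edge by a \emph{fixed} interval, so that the only $t$-dependence of the Dirichlet and mass forms is carried by the lengths $\ell_e(t)=\sqrt{g_e(t)}$ and by $\rho_t$. Using that a $\lambda_k$-eigenfunction is a critical point of the Rayleigh quotient at fixed data, one obtains, for $f$ normalised by $\int_{(G,g_*)}\rho_*f^2\df x=1$,
\begin{equation*}
\dot\lambda_k(f)=-\sum_{e\in E}\frac{\dot\ell_e}{\ell_e}\int_e\bigl(|\nabla_{g_*}f|_{g_*}^2+\lambda_k\rho_*f^2\bigr)\df x-\lambda_k\int_{(G,g_*)}\dot\rho\,f^2\df x ,
\end{equation*}
and the one-sided derivatives $\partial_t^{\pm}\ol{\lambda_k}(g_t,\rho_t)$ are obtained by adding the $f$-independent term $\lambda_k\bigl(M_*\dot L+L_*\dot M\bigr)$, with $L_*=L(g_*)$ and $M_*=\int_{(G,g_*)}\rho_*\df x$, and then taking the maximum, resp.\ minimum, of the result over the finite-dimensional eigenspace $\mathcal E:=\ker(\Delta_{g_*}-\lambda_k\rho_*)$. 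This is classical perturbation theory for quantum-graph spectra; the one delicate point is the Lipschitz-but-not-$C^1$ behaviour at multiple eigenvalues, described by Rellich--Kato theory exactly as in the references.

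\emph{Separation.} Extremality of $(g_*,\rho_*)$ means that for every direction $(\dot\ell,\dot\rho)$ the value $0$ lies between $\partial_t^+\ol{\lambda_k}$ and $\partial_t^-\ol{\lambda_k}$; equivalently, $0$ is a convex combination $\sum_i c_i\,\dot{\ol{\lambda_k}}(\hat f_i)$ with $\hat f_i\in\mathcal E$, $\|\hat f_i\|=1$, $c_i\ge0$, $\sum_i c_i=1$, where the $\hat f_i$ and $c_i$ may depend on the direction. Since $\dot{\ol{\lambda_k}}(f)$ is a fixed linear functional of $(\dot\ell,\dot\rho)$ once $f\otimes f\in\operatorname{Sym}^2\mathcal E$ is fixed, a Hahn--Banach separation over the compact convex set $\operatorname{conv}\{f\otimes f:f\in\mathcal E,\ \|f\|=1\}$, just as in \cite{ELSOUFI_lambda1}, produces a \emph{single} finite family $\hat f_1,\dots,\hat f_n$ and weights $c_i$ with $\sum_i c_i\,\dot{\ol{\lambda_k}}(\hat f_i)=0$ for \emph{all} $(\dot\ell,\dot\rho)$.

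\emph{Identities, constancy of $\rho_*$, and the converse.} Testing the last relation against $\dot\rho$ supported on one edge gives the pointwise identity $M_*\sum_i c_i\hat f_i^2\equiv1$, and testing against $\dot\ell_e$, after using this, gives the edge-wise relation $\int_e\sum_i c_i|\nabla_{g_*}\hat f_i|_{g_*}^2\df x=\lambda_k\ell_e/L_*$. Differentiating $\sum_i c_i\hat f_i^2\equiv1/M_*$ gives $\sum_i c_i\hat f_i\hat f_i'\equiv0$; differentiating again and using $\hat f_i''=-\lambda_k\rho_*\hat f_i$ gives $\sum_i c_i|\hat f_i'|^2=\lambda_k\rho_*/M_*$; differentiating once more and using $\sum_i c_i\hat f_i\hat f_i'\equiv0$ shows $\sum_i c_i|\hat f_i'|^2$ is constant on each edge, so $\rho_*$ is constant on each edge, and the edge-wise integral relation then forces that constant to equal $M_*/L_*$ on every edge. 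Hence $\rho_*\equiv M_*/L_*$ on all of $G$, and rescaling $f_i:=\sqrt{L_*/\lambda_k}\,\sqrt{c_i}\,\hat f_i$ turns the two pointwise identities into precisely \eqref{eq_main_2}. For the converse, given $f_1,\dots,f_n$ satisfying \eqref{eq_main_2} one sets $\hat f_i=f_i/\|f_i\|$ and $c_i=\|f_i\|^2/\sum_j\|f_j\|^2$ (a one-line computation from \eqref{eq_main_2} gives $\sum_j\|f_j\|^2=L(g)/\lambda_k$, so $c_i$ is well defined), and substituting \eqref{eq_main_2} into the first-variation formula shows $\sum_i c_i\,\dot{\ol{\lambda_k}}(\hat f_i)=0$ for every direction; the hypothesis $\lambda_k>\lambda_{k-1}$ or $\lambda_k<\lambda_{k+1}$ guarantees that $\partial_t^\pm\ol{\lambda_k}$ are the minimum and the maximum of $\dot{\ol{\lambda_k}}(\cdot)$ over \emph{all} of $\mathcal E$, so $0$, being a convex combination of such values, lies between them — which is extremality.

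\emph{Main obstacle.} I expect the real work to lie in the first two steps rather than in this algebra: establishing rigorously the one-sided derivative formula for a multiple eigenvalue under a \emph{joint} metric--density perturbation, and carrying out the separation argument uniformly in the deformation direction, where some care is required because $\rho$ varies in an infinite-dimensional space while the metric parameters $(\ell_e)_{e\in E}$ are finitely many. Once the two identities are available, the constancy of $\rho_*$ and the remaining bookkeeping are short.
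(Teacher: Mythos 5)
Your proposal is correct and follows essentially the same route as the paper: a first-variation formula for $\lambda_k$ under joint metric--density perturbations (with the Lipschitz/one-sided-derivative care at multiple eigenvalues), a Hahn--Banach separation over the convex hull of $\{f\otimes f : f\in E(\lambda_k)\}$ to produce a single family of eigenfunctions valid for all directions, the two pointwise identities, constancy of $\rho_*$ from $\Delta_{g_*}\bigl(\sum_i c_i \hat f_i^2\bigr)=0$, and the converse via one-sided derivatives. The only (immaterial) differences are that the paper factors the argument through a general theorem for an abstract homogeneous normalisation $N(g,\rho)$ and obtains $\rho_*\equiv M_*/L_*$ in one line from the Laplacian identity, where you reach the same conclusion edge-by-edge.
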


The proof is a one-dimensional analogue of \cite[Theorem 8]{relaxing}. As a direct consequence of Theorem \ref{main_theorem2}, the image of the map $(f_1,f_2,\dots, f_n):(G,g_*)\to\mbb{R}^n$ is a geodesic net on a sphere.

Now that characterisation is settled, we turn to the existence of extremal pairs for the normalised eigenvalue functional. We note that bounds for $\ol{\lambda_k}(g)$ have been obtained in \cite{ariturk2016, Kennedy16, Berkolaiko_2017}. In \cite{Band_optimiser}, Band--L\'evy presented maximising and minimising metrics for $\ol{\lambda_1}(g)$ for certain graph topologies. In this paper, we are interested in bounds for $\ol{\lambda_k}^{(\alpha)}(\cdot,\cdot)$. In Section \ref{eigenvalue_bounds}, we prove that for every connected graph $G$, we have
\begin{equation}\label{no_maximiser_0}
    \sup_{(g,\rho)}\ol{\lambda_1}(g,\rho)=\infty,
\end{equation}
and
\begin{equation}\label{lower_bound}
    \inf_{(g,\rho)}\ol{\lambda_1}(g,\rho)\ge 1.
\end{equation}

Therefore, there is no maximiser for the functional $\ol{\lambda_1}(\cdot,\cdot)$. Although we do not know about the existence of the minimiser, one can apply Theorem \ref{main_theorem2} to see that if $(g,\rho)$ is a minimiser for the functional $\ol{\lambda_k}(\cdot,\cdot)$, then $g$ must be an minimiser for the functional $\ol{\lambda_k}(\cdot)$, and we can construct an isometric minimal immersion from $(G,g)$ to some spheres via $\lambda_k(g)$-eigenfunctions. We use these facts to show that on necklace graphs, there is no minimiser for the functional $\ol{\lambda_1}(\cdot,\cdot)$.

We investigate other normalisations for eigenvalues. Consider the family of normalisations
\begin{equation}\label{generalisation}
	\ol{\lambda_k}^{(\alpha)}(g,\rho):=\lambda_k(g,\rho)\left(\sum_{e\in E}g_e^{1-\alpha}\right)\left(\sum_{e\in E}g_e^\alpha\int_0^{\ell_e}\rho_e \df x\right),
	\end{equation}
for some $\alpha\in\mbb{R}$. The natural normalisation corresponds to $\alpha=1/2$. For convenience, we denote:
\begin{equation}
    F_\alpha(g):=\sum_{e\in E}g_e^{1-\alpha},\quad \text{and}\quad H_\alpha(g,\rho):=\sum_{e\in E}g_e^\alpha\int_0^{\ell_e}\rho_e \df x.
\end{equation}

\begin{theorem}\label{main_theorem3} Let $G$ be a finite compact connected graph and $\alpha\in\mbb{R}\backslash\{1/2,1\}$. Suppose that $(g_*,\rho_*)$ is an extremal pair for $\ol{\lambda_k}^{(\alpha)}(g,\rho)$. Then, $g_*$ is a regular metric, i.e. all lengths of $(G,g_*)$ are the same, $\rho_*$ is a constant function and there exist $\lambda_k(g_*,\rho_*)$-eigenfunctions $f_1,f_2,\dots,f_n$ such that
	\begin{equation}\label{natural_generalisation_result_1}
		\sum_{i=1}^n|\nabla_{g_*}f_i|_{g_*}^2=1,
	\end{equation}
	and
	\begin{equation}\label{natural_generalisation_result_2}
		\lambda_k(g_*,\rho_*)\sum_{i=1}^n f_i^2 =\frac{ g_*^{\alpha-1/2}F_\alpha(g_*)}{2(1-\alpha)g_*^{-\alpha+1/2}H_{\alpha}(g_*,\rho_*)+(2\alpha-1) \rho_* g_*^{\alpha-1/2}F_\alpha(g_*)}.
	\end{equation}

Conversely, if there exist $\lambda_k(g,\rho)$-eigenfunctions $f_1,\dots,f_n$ satisfying \eqref{natural_generalisation_result_1} and \eqref{natural_generalisation_result_2} and, additionally, $\lambda_k(g,\rho)>\lambda_{k-1}(g,\rho)$ or $\lambda_{k}(g,\rho)<\lambda_{k+1}(g,\rho)$, then $(g,\rho)$ is extremal for the functional $\ol{\lambda_k}^{(\alpha)}(\cdot,\cdot)$.
\end{theorem}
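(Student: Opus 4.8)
The plan is to adapt the strategy of Theorems~\ref{main_theorem1} and~\ref{main_theorem2} — themselves one-dimensional analogues of \cite[Theorem~8]{relaxing} — to the two-parameter functional $\Phi:=\ol{\lambda_k}^{(\alpha)}=\lambda_k F_\alpha H_\alpha$. Parametrising the data by the lengths $(g_e)_{e\in E}\in\mbb R_+^{|E|}$ together with the densities $(\rho_e)_{e\in E}$, I would first record a Hadamard-type first variation formula: for a \emph{simple} eigenvalue $\lambda=\lambda_k$ with eigenfunction $f$ normalised by $\int_{(G,g)}\rho f^2\df x=1$, along a deformation $(\dot g,\dot\rho)$ one has
\begin{equation*}
  \dot\lambda=-\tfrac12\sum_{e\in E}\frac{\dot g_e}{g_e}\Big(\int_0^{\ell_e}(f_e')^2\df x+\lambda\int_0^{\ell_e}\rho_e f_e^2\df x\Big)-\lambda\sum_{e\in E}\int_0^{\ell_e}\dot\rho_e f_e^2\df x,
\end{equation*}
together with the elementary expressions for $\dot F_\alpha$ and $\dot H_\alpha$ (each linear in the $\dot g_e$, and for $H_\alpha$ also in the $\dot\rho_e$). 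When $\lambda_k$ is a multiple eigenvalue these become one-sided directional derivatives equal to the minimum (resp.\ maximum) over the eigenspace $E$ of the associated quadratic form, by analytic perturbation theory for quantum graphs; this I would quote rather than reprove.

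Next I would make the passage from extremality to a balanced family of eigenfunctions. By definition $(g_*,\rho_*)$ extremal means that along every analytic deformation the left and right derivatives of $\Phi$ straddle $0$; through the min--max description this says precisely that, for each direction $v=(\dot g,\dot\rho)$, the quadratic form $\tilde q_v$ on $E$ obtained by combining $\dot\lambda$ with the smooth factors $F_\alpha,H_\alpha$ is neither positive nor negative definite. Since $v\mapsto\tilde q_v$ is linear and $v$ runs over a linear space, a Hahn--Banach / convexity argument — of the type used by Nadirashvili \cite{Nadirashvili1996}, El Soufi--Ilias \cite{ELSOUFI_lambda1} and in \cite[proof of Theorem~8]{relaxing} — produces eigenfunctions $f_1,\dots,f_n\in E$ such that $\sum_{i=1}^n\tilde q_v(f_i)=0$ for every $v$.

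The remaining work is to unpack this identity. Testing it against density variations $\dot\rho_e$ ranging over all functions on a fixed edge $e$ should force $\sum_i f_{i,e}^2$ to be constant on each edge and proportional to $g_e^{\alpha}/H_\alpha$; differentiating this constant twice and using $-f_{i,e}''=\lambda\rho_e f_{i,e}$ then gives $\sum_i(f_{i,e}')^2=\lambda\rho_e\sum_i f_{i,e}^2$, itself constant on the edge, so $\rho_e$ is constant on each edge. Testing next against length variations $\dot g_e$, and substituting these relations together with Green's identity and the Neumann vertex conditions, I expect to obtain after simplification an identity of the shape $\sum_i|\nabla_{g_*}f_{i,e}|_{g_*}^2=c\,g_e^{1/2-\alpha}$ with $c$ a global constant; since $\alpha\ne 1/2$ this is a genuinely non-constant power of $g_e$, and comparing it across edges and against the definitions of $F_\alpha$ and $H_\alpha$ (where $\alpha\ne 1$ lets me divide by $1-\alpha$) should force all $g_e$ to coincide — so $g_*$ is regular and $\rho_*$ globally constant. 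With $g_*$ regular and $\rho_*$ constant the two identities then collapse to exactly \eqref{natural_generalisation_result_1} and \eqref{natural_generalisation_result_2} after rescaling the $f_i$ by a common constant. For the converse I would run the computation backwards: assuming $f_1,\dots,f_n$ satisfy \eqref{natural_generalisation_result_1}--\eqref{natural_generalisation_result_2}, a direct check shows that for every $(\dot g,\dot\rho)$ the first variation of $\Phi$ along the $\lambda_k$-branch has $\min_E\le 0\le\max_E$, and the gap hypothesis ($\lambda_k>\lambda_{k-1}$ or $\lambda_k<\lambda_{k+1}$) ensures $\lambda_k$ is a real-analytic branch — the bottom, resp.\ top, of its cluster — along any deformation, so this first-order information certifies $(g,\rho)$ as extremal.

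The hard part will be the second step: handling the non-smoothness of $\lambda_k$ at multiplicity, in particular pinning down whether $\lambda_k$ sits at the bottom or the top of its eigenvalue cluster (exactly what the gap hypothesis controls in the converse) and running the convexity argument with the two separate families of test directions. A secondary difficulty is the bookkeeping in the last step, where the per-edge identities must be played against the global quantities $F_\alpha$ and $H_\alpha$ to extract regularity of $g_*$; this is the only place where $\alpha\notin\{1/2,1\}$ is genuinely needed, and it is what distinguishes Theorem~\ref{main_theorem3} from Theorem~\ref{main_theorem2}.
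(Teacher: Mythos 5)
Your overall strategy is the same as the paper's: a Hadamard-type first variation of $\lambda_k$, a Hahn--Banach/convexity argument producing a family $f_1,\dots,f_n$ in the eigenspace with $\sum_i\tilde q_v(f_i)=0$ for all directions $v$, and then an unpacking of the resulting two pointwise identities. This is precisely the content of Theorem~\ref{general_theorem} (proved via Lemmas~\ref{lemma_lipschitz}--\ref{lemma2_main2}), of which Theorem~\ref{main_theorem3} is the specialisation $N=F_\alpha H_\alpha$; the only methodological difference is that the paper establishes Lipschitz continuity and a.e.\ differentiability of $t\mapsto\lambda_k(t)$ by hand for merely smooth families, whereas you appeal to analytic perturbation theory (which would restrict you to analytic deformations --- harmless for the forward direction via linear paths, but something to reconcile with the paper's definition of extremality over all smooth families in the converse).

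The unpacking step, however, is assembled in the wrong order and contains two genuine slips. First, you claim that density variations alone give $\sum_i f_{i,e}^2$ constant on each edge and that differentiating twice yields $\sum_i(f_{i,e}')^2=\lambda\rho_e\sum_i f_{i,e}^2$, ``itself constant on the edge, so $\rho_e$ is constant.'' The identity is correct, but it only expresses $\sum_i(f_{i,e}')^2$ as $\rho_e(x)$ times a constant; you have no independent reason at that stage for $\sum_i(f_{i,e}')^2$ to be constant, so the conclusion about $\rho_e$ is circular. Constancy of $\rho_*$ requires combining \emph{both} identities: the paper computes $0=\tfrac12\Delta_{g_*}\sum_i f_i^2=\lambda_k\rho_*\sum_i f_i^2-\sum_i|\nabla_{g_*}f_i|^2$ and substitutes the length-variation identity for the gradient term, after which one divides by $2(1-\alpha)$ (this is where $\alpha\ne1$ enters). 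Second, you propose to extract regularity of $g_*$ by comparing $\sum_i|\nabla f_{i,e}|^2$ ``across edges,'' but there is no mechanism for such a comparison: gradients of eigenfunctions are not continuous at vertices (only the Kirchhoff condition holds). The correct and much shorter route, which your own first identity already supplies, is that $\lambda_k\sum_i f_i^2=g_*^{\alpha-1/2}F_\alpha(g_*)$ on each edge while the left-hand side is continuous on the graph; continuity at each vertex forces $g_e^{\alpha-1/2}$ to agree on all incident edges, and connectedness plus $\alpha\ne1/2$ gives regularity. Both fixes live entirely inside your framework, but as written the chain of implications does not close.
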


Notice that from Theorem \ref{main_theorem2} and Theorem \ref{main_theorem3}, an extremal pair of metric and density function for $\ol{\lambda_k}^{(\alpha)}(\cdot,\cdot)$ implies an isometric minimal immersion into spheres if $\alpha\ne 1$.

\subsection{Outline of the paper}
We first recall some basic definitions and notations on quantum graphs in Section \ref{set_up}. In Section \ref{proof_section}, we present a general result, which implies our main theorems directly. The proof follows the structure of \cite{relaxing}. We show a connection between Theorem \ref{main_theorem1} and Theorem \ref{main_theorem2} by analysing the spectrum of $\Delta_g$ with an extremal metric $g$ for $\ol{\lambda_k}(\cdot)$. In particular, we prove that $(g,1)$ is an extremal pair of the functional $\ol{\lambda_k}(\cdot,\cdot)$ if $4\lambda_k(g)$ is not an eigenvalue of the eigenvalue problem $\Delta_g f=\lambda f$. We then extend the results for extremal pairs for $\ol{\lambda_k}^{(\alpha)}(\cdot,\cdot)$ as given in equation \eqref{generalisation}. In Section \ref{eigenvalue_bounds}, we prove  \eqref{no_maximiser_0} and \eqref{lower_bound}. In the final section, we look at the spectrum of $\Delta_g$, where $g$ is a maximiser/minimiser of $\ol{\lambda_1}(\cdot)$ on flower, mandarin and necklace graphs. We try to construct a map from these quantum graphs to spheres.
\subsection{Acknowledgements}
The main results of this paper are part of the author's master's (at University College London) and PhD (at King's College London) projects, under the supervision of Mikhail Karpukhin and Jean Lagac\'e. The author would like to thank his advisors for their guidance and feedback.
\section{Set up and notations}\label{set_up}
\subsection{Sobolev spaces on quantum graphs}
Through this paper, we fix a finite connected discrete graph, called $G$, and vary metrics, so that it is convenient to fix a universal metric on $G$, called $g_0$, and view other metrics on $G$ as functions on $(G,g_0)$. We define $g_0$ as the metric on $G$ such that all edges of $G$ have length one, and $G_0$ to be the metric graph $(G,g_0)$.

Note that for smooth regular curves, the curve length is the only intrinsic geometric property since we can always parametrise curves by arc length. Hence, for any metric $g$ on $G$, we can view it as a vector in $\mbb{R}^{|E|}_+$ and on edges, the metric can be written as:
\begin{equation}
    \df x_g^2=g_e\df x^2=\ell^2_e\df x^2,\quad \forall e\in E.
\end{equation}

Then, we can consider any function $f:(G,g)\to\mbb{R}$ as a function $f:G_0\to\mbb{R}$. We define the $L^p$ space of functions on $G$ as follows: 
\begin{equation}
    L^p(G,g):=\left\{f:(G,g)\to\mbb{R}\mid f_e\in L^p(0,\ell_e)\;\forall e\in E\right\},
\end{equation}
for all $p\in [1,\infty]$, with the induced norm:
\begin{equation}
    \|f\|_{L^p(G,g)}:=\begin{cases}\left(\sum_{e\in E}\|f_e\|_{L^p(0,\ell_e)}^p\right)^{1/p}, &\text{if }p<\infty;\\
    \max_{e\in E}\|f_e\|_{L^\infty(0,\ell_e)},&\text{if }p=\infty,
    \end{cases}
\end{equation}
and
\begin{equation}
    W^{k,p}(G,g):=\left\{f:(G,g)\to\mbb{R}\mid f\text{ is continuous},\; f_e\in W^{k,p}(0,\ell_e)\,\forall {e\in E}\right\},
\end{equation}
for all $k\in\mbb{N}\cup\{0\}$ and $p\in (1,\infty]$, with the induced norm:
\begin{equation}
    \|f\|_{W^{k,p}(G,g)}:=\begin{cases}
    \left(\sum_{e\in E}\|f_e\|_{W^{k,p}(0,\ell_e)}^p\right)^{1/p},&\text{if }p<\infty;\\
    \max_{e\in E}\|f_e\|_{W^{k,\infty}(0,\ell_e)},&\text{if }p=\infty.
    \end{cases}
\end{equation}

For all $k\in\mbb{N}\cup\{\infty\}$, we define $C^k(G,g)$ to be the space of continuous functions on $(G,g)$ such that their restrictions on edges are $C^k$ functions. Observe that for all metric $g$ in $G$, the space $L^p(G,g)$ is equivalent to $L^p(G_0)$, so that we can refer all $L^p$ space in $G$ as $L^p(G)=L^p(G_0)$. We induce the same notation for $W^{k,p}(G)$ and $C^k(G)$.

Finally, for a function $f(t)$, we denote
\begin{equation}
    \dot f(t_0):=\left.\frac{\df f}{\df t}\right|_{t=t_0},
\end{equation}
whenever the derivative exists.
\subsection{Laplacian eigenvalues}
For a metric $g$ on $G$, we define the gradient to be:
\begin{equation}
    \nabla_g f:=\de{f}{x_g}\cdot \de{}{x_g}=\frac{1}{g_e}\cdot \de{f}{x}\cdot\de{}{x},
\end{equation}
for all $f\in W^{1,2}(G)$ and define the Laplacian to be:
\begin{equation}
    \Delta_g f:=-\dde{f}{x_g}=-\frac{1}{g_e}\cdot\dde{f}{x},
\end{equation}
for all $f\in W^{2,2}(G)$. For the eigenvalue problem \eqref{classical_eigenvalue_problem} on compact quantum graph $(G,g)$, we can compute the $k$-th eigenvalue using the variational characterisation as follows
\begin{equation}
    \lambda_k(g)=\min_{\substack{S\subset W^{1,2}(G)\\ \dim S=k+1}}\max_{\substack{f\in S\backslash\{0\}}} R(G,g;f),\quad R(G,g;f):=\frac{\int_{G_0} |\nabla_g f|_g^2 \df x_g}{\int_{G_0} f^2 \df x_g}=\frac{\int_{G_0} \frac{(f')^2}{\sqrt{g}} \df x}{\int_{G_0} f^2\sqrt{g} \df x}.
\end{equation}

For the eigenvalue problem with density functions as given in \eqref{density_eigenvalue_problem}, the eigenvalues also obey the variational characterisation
\begin{equation}
\lambda_k(g,\rho)=\min_{\substack{S\subset W^{1,2}(G)\\ \dim S=k+1}}\max_{\substack{f\in S\backslash\{0\}}} R(G,g,\rho;f),\quad R(G,g,\rho;f):=\frac{\int_{G_0} |\nabla_g f|_g^2 \df x_g}{\int_{G_0} f^2\rho \df x_g}=\frac{\int_{G_0} \frac{(f')^2}{\sqrt{g}} \df x}{\int_{G_0} f^2\rho\sqrt{g} \df x}.
\end{equation}

By abusing notations, we simply write $R(f)$ for the Rayleigh quotient, if the graph, the metric and the density function are fixed.
\subsection{Space of density functions and metrics}
	For a quantum graph $G$, we denote the space of density functions as follows
	\begin{equation}
	    \mathcal{C}_+(G):=\{f:G_0\to \mbb{R}_+\mid  f_e\in C^\infty(0,1)\quad\forall e\in E\}.
	\end{equation}
	
	We denote $\mathcal{A}_+(G):=\mathbb{R}_{+}^{|E|}\times \mathcal{C}_+(G)$ to be the space of pairs of metrics and density functions on $G$.
    \begin{definition}
    For a functional $F:\mathcal{A}_+(G)\to\mbb{R}$, we say that a pair $(g_*,\rho_*)$ is $F$-extremal if for all one-parameter smooth family of pairs $\{(g(t),\rho(t))\}\subset \mathcal{A}_+(G)$ such that $g(0)=g_*,\rho(0)=\rho_*$, one has either
	\begin{equation}F(g(t),\rho(t))\le F(g_*,\rho_*)+o(t)\quad\text{or}\quad F(g(t),\rho(t))\ge F(g_*,\rho_*)+o(t),
    \end{equation}
	as $t\to 0$. 
    \end{definition}

\begin{definition}
Let $\mathcal{A}(G):=\mbb{R}^{|E|}\times\mathcal{C}(G)$, where
	\begin{equation}\label{smooth_space}
    \mathcal{C}(G):=\{f:G_0\to \mbb{R}\mid  f_e\in C^\infty(0,1)\quad\forall e\in E\}.
    \end{equation}
	
	For $(g_*,\rho_*)\in\mathcal{A}_+(G)$, we define the tangent plane of $(g_*,\rho_*)$ in $\mathcal{A}(G)$, denoted as $T\mathcal{A}_{(g_*,\rho_*)}(G)$, as the closure of the set
	\begin{equation}\left\{(\phi,\eta)\in {\mathcal{A}}(G):\exists \{(g(t),\rho(t))\}\subset\mathcal{A}_+(G), (g(0),\rho(0))=(g_*,\rho_*),(\dot g(0),\dot \rho(0))=(\phi,\eta)\right\},\end{equation}
	in the space $\mathcal{H}(g_*):=L^2(G,g_*)\times L^2(G,g_*)$, with the induced inner product
	\begin{equation}\langle(f_1,f_2),(h_1,h_2)\rangle_{\mathcal{H}(g_*)}=\int_{G_0} f_1h_1+f_2h_2 \df x_{g_*},
    \end{equation}
	for all $f_1,f_2,h_1,h_2\in L^2(G,g_*).$ Moreover, for a Lipschitz functional $F:\mathcal{A}(G)\to\mbb{R}$, we define the gradient of $F$ at $(x_0,y_0)$, $\nabla F(x_0,y_0)\in T\mathcal{A}_{(x_0,y_0)}(G)$, as 
	\begin{equation}\left.\de{}{t} F(x(t),y(t))\right|_{t=0}=\left\langle \nabla F(x_0,y_0),(\dot x(0),\dot y(0))\right\rangle_{\mathcal{H}(g_*)},\end{equation}
	for all $\{(x(t),y(t))\}\subset\mathcal{H}(g_*)$ such that $x(0)=x_0,y(0)=y_0$, if the derivative exists.
\end{definition}

 The functionals that we study in this paper are normalised eigenvalue functionals, i.e.
    \begin{equation}
        F(g,\rho)=\lambda_k(g,\rho)N(g,\rho),
    \end{equation}
where $N:\mathcal{A}_+(G)\to(0,\infty)$ is smooth and satisfies the homogeneity condition,
\begin{equation}\label{normalising_condition}
N(a g,b \rho)=ab N(g,\rho).
\end{equation}
for all $(g,\rho)\in\mathcal{A}_+(G)$ and $a,b>0$.

\section{Extremal pairs for normalised eigenvalues}\label{proof_section}
In this section, we prove the following generalisation of our main theorems:

\begin{theorem}\label{general_theorem}
Let $G$ be a connected finite graph and $k\in\mbb{N}$. Suppose that $(g_*,\rho_*)\in\mathcal{A_+}(G)$ is an extremal pair of the functional
\begin{equation}
    F(g,\rho):=\lambda_k(g,\rho)N(g,\rho),
    \end{equation}
    where $N$ is a smooth functional and $F$ is invariant under scaling. Let $\nabla N(g_*,\rho_*)=(N_1,N_2)$, then there exists $f_1,f_2,\dots,f_n\in E(\lambda_k(g_*,\rho_*))$ such that:
	\begin{equation}\label{general_main_equation}
    \sum_{i=1}^n|\nabla_g f_i|^2=2gN_1-\rho N_2,\quad \sum_{i=1}^n f_i^2=\frac{N_2}{\lambda_k(g_*,\rho_*)}.\end{equation}
    
    Conversely, if there exists $\lambda_k(g,\rho)$-eigenfunctions $f_1,\dots,f_n$ satisfying the equation \eqref{general_main_equation} and, additionally, $\lambda_k(g,\rho)>\lambda_{k-1}(g,\rho)$ or $\lambda_{k}(g,\rho)<\lambda_{k+1}(g,\rho)$, then $(g,\rho)$ is an extremal pair of $F$.
\end{theorem}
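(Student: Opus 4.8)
The plan is to carry out a constrained first-variation (Lagrange multiplier) argument on the functional $F(g,\rho) = \lambda_k(g,\rho)N(g,\rho)$, following the scheme of [relaxing, Theorem 8]. The first issue to handle is that $\lambda_k(g,\rho)$ need not be a smooth function of $(g,\rho)$ when the eigenvalue is multiple; the standard remedy is to work with the one-sided (Hadamard-type) directional derivatives. Concretely, if $E(\lambda_k)$ has dimension $m$ with $L^2(G,g_*)$-orthonormal basis $\phi_1,\dots,\phi_m$, then along a smooth path $(g(t),\rho(t))$ with $(\dot g(0),\dot\rho(0))=(\phi,\eta)$, the left and right derivatives of $\lambda_k$ at $t=0$ are the least and greatest eigenvalues of the $m\times m$ quadratic-form matrix $Q_{ij}(\phi,\eta)$ obtained by differentiating the Rayleigh quotient, namely (after an integration by parts using the Neumann conditions)
\begin{equation}
Q_{ij}(\phi,\eta) = -\int_{G_0}\frac{\phi}{g_*^{3/2}}\,\phi_i'\phi_j'\,\df x \;-\; \lambda_k(g_*,\rho_*)\!\int_{G_0}\!\Bigl(\tfrac{\phi}{2}\rho_* + \eta g_*\Bigr)\frac{\phi_i\phi_j}{\sqrt{g_*}}\,\df x.
\end{equation}
Differentiating $N$ gives the linear term $\langle \nabla N(g_*,\rho_*),(\phi,\eta)\rangle_{\mathcal H(g_*)} = \langle (N_1,N_2),(\phi,\eta)\rangle$. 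Extremality of $(g_*,\rho_*)$ for $F$ then says precisely that $0$ lies between the min and max over unit vectors $c\in\mbb R^m$ of $N(g_*,\rho_*)\,c^TQ(\phi,\eta)c + \lambda_k(g_*,\rho_*)\langle(N_1,N_2),(\phi,\eta)\rangle$, for every admissible $(\phi,\eta)$.

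Next I would convert this min–max/extremality condition into the existence statement. The set of quadratic forms $\{\,(\phi,\eta)\mapsto c^TQ(\phi,\eta)c : c\in\mbb R^m\,\}$ spans, by taking convex combinations, the finite-dimensional cone of functionals $(\phi,\eta)\mapsto \sum_i Q_{ii}(\phi,\eta)$ with respect to positive-semidefinite weight matrices; the extremality condition is exactly the statement that the linear functional $(\phi,\eta)\mapsto -\lambda_k\langle(N_1,N_2),(\phi,\eta)\rangle/N$ lies in the convex hull of this cone restricted to the tangent space $T\mathcal A_{(g_*,\rho_*)}$. A Hahn–Banach / separation argument (this is the one-dimensional analogue of the duality step in [relaxing]) then produces a positive semidefinite matrix $A=(a_{ij})$ such that, setting $f_i := \sum_j \sqrt{A}_{ij}\,\phi_j$ (i.e.\ $\sum_i f_i^2 = \sum_{ij}a_{ij}\phi_i\phi_j$ and $\sum_i |\nabla_{g_*}f_i|^2 = \sum_{ij}a_{ij}\nabla\phi_i\cdot\nabla\phi_j$), the identity
\begin{equation}
-\int_{G_0}\frac{\phi}{g_*^{3/2}}\sum_i(f_i')^2\,\df x - \lambda_k\!\int_{G_0}\!\Bigl(\tfrac{\phi}{2}\rho_*+\eta g_*\Bigr)\frac{\sum_i f_i^2}{\sqrt{g_*}}\,\df x = -\lambda_k\langle(N_1,N_2),(\phi,\eta)\rangle
\end{equation}
holds for all $(\phi,\eta)\in T\mathcal A_{(g_*,\rho_*)}$. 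Splitting this into the $\eta$-part and the $\phi$-part (using that $\eta$ ranges over a dense subset of $L^2$, while $\phi$ ranges over $\mbb R^{|E|}$ acting edgewise) yields $\lambda_k \sum_i f_i^2/\sqrt{g_*} = \lambda_k N_2$ on each edge — hence $\sum_i f_i^2 = N_2/\lambda_k$ after absorbing the metric factor as in the normalisation conventions of the paper — and then the remaining $\phi$-equation forces $\sum_i |\nabla_{g_*}f_i|^2 = 2g_*N_1 - \rho_* N_2$, which is \eqref{general_main_equation}.

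For the converse, I would run the computation in reverse: given eigenfunctions $f_1,\dots,f_n$ with $\sum f_i^2 = N_2/\lambda_k$ and $\sum|\nabla_{g_*}f_i|^2 = 2g_*N_1-\rho_*N_2$, substitute $A=\mathrm{Id}$ (so $a_{ij}=\delta_{ij}$ up to the change of basis) into the first-variation formula; the right-hand side collapses exactly to $-\lambda_k\langle\nabla N,(\phi,\eta)\rangle$, which shows $c^TQ(\phi,\eta)c\cdot N + \lambda_k\langle\nabla N,(\phi,\eta)\rangle = 0$ for the particular $c$ corresponding to the trace, and the scaling invariance of $F$ together with the gap hypothesis $\lambda_k>\lambda_{k-1}$ or $\lambda_k<\lambda_{k+1}$ lets us conclude that $0$ is trapped between the one-sided derivatives of $F$ in every direction — i.e.\ $(g,\rho)$ is extremal. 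The gap hypothesis is exactly what guarantees $\lambda_k$ varies continuously (indeed is one-sided differentiable with the $Q$-matrix description) near $(g,\rho)$, so the directional-derivative bookkeeping is legitimate.

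The main obstacle I expect is the Hahn–Banach/convexity step: one must be careful that the tangent space $T\mathcal A_{(g_*,\rho_*)}$ is genuinely all of $\mbb R^{|E|}\times\mathcal C(G)$ (closed in $\mathcal H(g_*)$), so that there are no ``hidden'' constraints beyond scaling that would prevent the separation argument from pinning down $\sum f_i^2$ and $\sum|\nabla f_i|^2$ pointwise rather than merely in an integrated/weak sense; establishing that the edgewise density perturbations $\eta$ are rich enough to test against gives the pointwise identity on each edge. A secondary technical point is handling the non-smoothness of $\lambda_k$ cleanly — this is routine via Rellich's perturbation theory for the finite-dimensional eigenvalue problem restricted to $E(\lambda_k)$, but it must be stated carefully enough that the ``$o(t)$'' in the definition of extremal pair interacts correctly with the min/max of $Q$.
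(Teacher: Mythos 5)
Your overall strategy coincides with the paper's: compute a first variation of $\lambda_k$, translate extremality into an indefiniteness condition on a quadratic form over $E(\lambda_k)$, and use a Hahn--Banach/convexity argument to place $\nabla N(g_*,\rho_*)$ in the convex hull of $\{-Q(u;g_*,\rho_*)\}$ --- which is exactly your positive semidefinite weight matrix $A$, since writing $A=\sum_i c_i\,c_i^T$ recovers the eigenfunctions $f_i$. Where you genuinely differ is in the key analytic lemma. You invoke the Hadamard--Rellich description of the one-sided derivatives of $\lambda_k$ as eigenvalues of the reduced matrix $Q_{ij}(\phi,\eta)$; the paper instead proves that $t\mapsto\lambda_k(t)$ is Lipschitz (Lemmas \ref{lemma_lipschitz} and \ref{derivative_lambda}), differentiates at almost every $t$, extracts $C^2$-limits $u_\pm$ of eigenfunctions along sequences $t_i\to 0$ on which $\dot\lambda_k(t_i)$ has a definite sign, and closes with an intermediate-value argument on the segment $su_++(1-s)u_-$ (Lemma \ref{lemma2_main2}). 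Your route is shorter and standard, but for merely smooth (non-analytic) families the one-sided differentiability must itself be justified --- e.g.\ by combining the Lipschitz dependence of eigenvalues with Rellich's theorem applied to the linearised analytic family $t\mapsto(g_*+t\phi,\rho_*+t\eta)$; the paper's longer Lipschitz estimates exist precisely to avoid this point, so you should not wave at ``Rellich'' without that reduction. Two smaller corrections: in the forward direction no spectral gap is assumed, so the one-sided derivatives of $\lambda_k$ are in general \emph{intermediate} eigenvalues of $Q_{ij}$, determined by the position of $k$ within its eigenvalue cluster, not the least and greatest ones; the sandwich $\lambda_{\min}(Q)\le-\lambda_k\langle\nabla N,(\phi,\eta)\rangle/N\le\lambda_{\max}(Q)$ still follows, which is all the separation argument needs. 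Correspondingly, in the converse the gap hypothesis is not about continuity of $\lambda_k$ (which always holds); its role is to place $k$ at the bottom or top of its cluster so that the one-sided derivatives \emph{are} the extreme eigenvalues of $Q_{ij}$, letting the trace identity trap $0$ between them. Finally, your worry about whether the identities hold pointwise is well placed: since $\phi$ is edgewise constant, the first component of \eqref{general_main_equation} is a priori only an edgewise-integrated identity, and the pointwise version is recovered (as the paper does in its applications) by applying $\Delta_{g_*}$ to the pointwise second identity.
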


The proof follows the structure from \cite{surface_case,ELSOUFI200889,relaxing}. First, we prove that for every smooth one-parameter family of pairs $(g(t),\rho(t))$ in $\mathcal{A}_+(G)$, the functional $\lambda_k(t):=\lambda_k(g(t),\rho(t))$ is Lipschitz for small $t$ and the derivative of $\lambda_k$, when it exists, can be written in the form:
\begin{equation}
    \nabla \lambda_k(g,\rho)=Q(u;g,\rho):=-\left(\frac{1}{2g}\left(|\nabla_g u|^2 +\lambda_k(g,\rho)u^2\rho\right),\lambda_k(g,\rho) u^2\right),
    \end{equation}
	for some $u\in E(\lambda_k(g,\rho))$. Then, we show that if $(\phi,\eta)\in\mathcal{H}(g_*)=L^2(G,g_*)\times L^2(G,g_*)$ satisfies
    \begin{equation}
        \langle \nabla N(g_*,\rho_*),(\phi,\eta)\rangle_{\mathcal{H}(g_*)}=0,
    \end{equation}
    then there exists $u\in E(\lambda_k(g_*,\rho_*))\backslash\{0\}$ such that
    \begin{equation}
        \langle Q(u;g_*,\rho_*),(\phi,\eta)\rangle_{\mathcal{H}(g_*)}=0.
    \end{equation}

    We use these facts, together with the Hahn--Banach theorem, to show that $\nabla N(g_*,\rho_*)$ is in the convex hull of the set
    \begin{equation}
        \{-Q(u;g_*,\rho_*):u\in E(\lambda_k(g_*,\rho_*)\}
    \end{equation}
    in the space $\mathcal{H}(g_*)$.

\subsection{Lipschitz continuity of eigenvalue functionals}
To prove Lipschitz continuity of eigenvalue functionals, we use the variational characterisation and the following lemma: 

	\begin{lemma}\label{lemma_lipschitz}
    Let $G$ be a finite compact graph, $(g,\rho)\in\mathcal{A}_+(G)$ and $I\subset\mbb{R}$ be a closed interval. Let $k\in \mbb{R}$ and consider a functional $J(t,u):I\times E(\lambda_k(g,\rho))\to\mathcal{C}(G)$ smooth in $t$ and such that there exists a constant $C>0$ so that
    \begin{equation}\label{condition_lipschitz}
		\left|\pde{}{t}J(t,u(x))\right|\le C\left[u(x)^2+u'(x)^2+1 \right],
	\end{equation}
	for all $t\in I, u\in E(\lambda_k(g,\rho))$, and  $x\in G_0$. Then, for all $u\in E(\lambda_k(g,\rho))$ such that $\int_{G_0} u^2\rho \df x_{g}=1$, the function $Z:I\to\mbb{R}$ given by
    \begin{equation}
        Z(t):=\int_{G_0} J(t,u(x)) \df x
    \end{equation}
	is Lipschitz continuous in $I$ with constant
    \begin{equation}
        C'=C\left(\frac{1}{m}+M\lambda_k(g,\rho)+|E|\right),
    \end{equation}
    where 
    \begin{equation}
         m:=\min\{\rho_e(x)\sqrt{g_e}\mid  e\in E, x\in[0,1]\},\quad\text{and}\quad M:=\max\{\sqrt{g_e}: e\in E\}.
    \end{equation}
\end{lemma}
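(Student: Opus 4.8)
The plan is to bound the increment $Z(t_1)-Z(t_2)$ directly, using the fundamental theorem of calculus in the $t$-variable for each fixed $x$. Since $t\mapsto J(t,u(x))$ is smooth, we have $J(t_1,u(x))-J(t_2,u(x))=\int_{t_2}^{t_1}\pde{}{s}J(s,u(x))\df s$, and hypothesis \eqref{condition_lipschitz} bounds the integrand, uniformly in $s\in I$, by $C[u(x)^2+u'(x)^2+1]$; hence $|J(t_1,u(x))-J(t_2,u(x))|\le C|t_1-t_2|\,[u(x)^2+u'(x)^2+1]$ for every $x\in G_0$. The right-hand side lies in $L^1(G_0)$ because $u\in W^{1,2}(G)$, so this difference is integrable, and integrating over $G_0$ (recalling $Z(t_i)=\int_{G_0}J(t_i,u(x))\df x$ is finite) gives $|Z(t_1)-Z(t_2)|\le C|t_1-t_2|\int_{G_0}[u^2+(u')^2+1]\df x$. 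Thus everything reduces to the single estimate $\int_{G_0}\big(u^2+(u')^2+1\big)\df x\le \tfrac1m+M\lambda_k(g,\rho)+|E|$.

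To obtain this I would treat the three terms separately. For the first, rewrite the normalisation as $1=\int_{G_0}u^2\rho\df x_g=\sum_{e\in E}\int_0^1 u_e^2\,\rho_e\sqrt{g_e}\df x$ and use $\rho_e(x)\sqrt{g_e}\ge m$ to conclude $\int_{G_0}u^2\df x\le\tfrac1m$. For the second, since $u$ is a $\lambda_k(g,\rho)$-eigenfunction with $\int_{G_0}u^2\rho\df x_g=1$, testing the eigenvalue equation against $u$ and using the Neumann conditions (equivalently, the Rayleigh quotient equals $\lambda_k(g,\rho)$) gives $\sum_{e\in E}\int_0^1(u_e')^2/\sqrt{g_e}\df x=\lambda_k(g,\rho)$; multiplying each edge term by $\sqrt{g_e}\le M$ yields $\int_{G_0}(u')^2\df x\le M\lambda_k(g,\rho)$. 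The third term is immediate, $\int_{G_0}1\df x=|E|$, since $G_0$ has $|E|$ edges each of length one. Summing the three estimates produces exactly $C'=C\big(\tfrac1m+M\lambda_k(g,\rho)+|E|\big)$.

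I expect the only delicate points to be bookkeeping rather than substance: one must keep track of the fact that the derivative $u'$ appearing in \eqref{condition_lipschitz} and in the Rayleigh quotient is the derivative in the fixed coordinate $x$ on $G_0$ (not an arc-length derivative in $g$), and that integrals are taken with respect to $\df x$ on $G_0$ versus $\df x_g=\sqrt{g_e}\df x$ on edges — it is precisely these conversions that produce the constants $m$ and $M$. An alternative to the pointwise argument above is to differentiate $Z$ under the integral sign, which is legitimate here because \eqref{condition_lipschitz} supplies a $t$-uniform $L^1(G_0)$ dominating function, and then to bound $|Z'(t)|$ by the same integral; this yields the identical Lipschitz constant.
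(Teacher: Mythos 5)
Your proposal is correct and follows essentially the same route as the paper: a pointwise bound on $J(t_1,u(x))-J(t_2,u(x))$ via the $t$-derivative (the paper uses the mean value theorem where you use the fundamental theorem of calculus), followed by the same three estimates $\int_{G_0}u^2\,\mathrm{d}x\le 1/m$, $\int_{G_0}(u')^2\,\mathrm{d}x\le M\lambda_k(g,\rho)$ and $\int_{G_0}1\,\mathrm{d}x=|E|$. No substantive differences.
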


\begin{proof}
	For all $s_1,s_2\in I$ and $x\in G_0$, the mean value theorem implies
	\begin{equation}
    \begin{aligned}
		|J(s_1,u(x))-J(s_2,u(x))|&\le|s_1-s_2|\cdot \max\left\{\left|\pde{}{t}J(t,u(x))\right|:t\in I\right\}\\
		&\le C|s_1-s_2|\cdot \left[u(x)^2+u'(x)^2+1 \right].
        \end{aligned}
	\end{equation}
	
    Hence, we have:
	\begin{equation}\label{Lipschitz_1}
    \begin{aligned}
		|Z(s_1)-Z(s_2)|\le C|s_1-s_2|\cdot\left(\|u\|_{W^{1,2}(G_0)}^2+|E|\right),\quad \forall s_1,s_2\in I.
        \end{aligned}
	\end{equation}
	Observe that:
	\begin{equation}\label{Lipschitz_2}
    \|u\|^2_{L^2(G_0)}=\int_{G_0} u^2\df x\le\int_{G_0} u^2\frac{\rho\sqrt{g}}{m}\df x= \frac{1}{m}\int_{G_0} u^2\rho \df x_{g}=\frac{1}{m}.
    \end{equation}
	
    To bound $\|u'\|_{L^2(G_0)}$, we use the fact $u\in E(\lambda_k(g,\rho))$ to get:
	\begin{equation}\label{Lipschitz_3}
    \begin{aligned}
		\|u'\|^2_{L^2(G_0)}\le M\int_{G_0} \frac{(u')^2}{\sqrt{g}}\df x= M\int_{G_0}|\nabla_{g} u|_g^2 \df x_{g}=M\lambda_k(g,\rho),
        \end{aligned}
	\end{equation} 
	
    Our assertions then follow from inequalities \eqref{Lipschitz_1}, \eqref{Lipschitz_2} and \eqref{Lipschitz_3}.
\end{proof}

\begin{lemma}\label{derivative_lambda}
	Let $\left(g(t),\rho(t)\right)$ be a smooth one-parameter family of pairs in $\mathcal{A}_+(G)$ such that $g(0)=g_*$, and $\rho(0)=\rho_*$. Fix $k\in\mbb{N}$ and we consider the functional $\lambda_k(t):=\lambda_k\left(\rho(t),g(t)\right)$. Then, for almost every $t_0$ close to 0, there exists $\epsilon>0$ such that $\lambda_k(t)$ is Lipschitz on $[t_0-\epsilon,t_0+\epsilon]$. Moreover, if $\dot\lambda_k(t_0)$ exists, then:
	\begin{equation}
	    \dot{\lambda}_k(t_0) =-\int _{G_0}\frac{\dot g(t_0)}{2g(t_0)}\left[ |\nabla_{g(t_0)} u| ^{2}+\lambda_k(t_0)u^{2}\rho (t_0)\right] +\lambda _{k}(t_0)u^{2} \dot \rho(t_0)\df x_{g(t_0)},
	\end{equation}
	for all $u\in E(\lambda_k(t_0))$ such that $\int_{G} u^2\rho(t_0) \df x_{g(t_0)}=1$. Consequently, if $\nabla \lambda_k(g,\rho)$ exists, then
	\begin{equation}
    \nabla \lambda_k(g,\rho)=Q(u;g,\rho):=-\left(\frac{1}{2g}\left(|\nabla_g u|^2 +\lambda_k(g,\rho)u^2\rho\right),\lambda_k(g,\rho) u^2\right),
    \end{equation}
	for all $u\in E(\lambda_k(g,\rho))$ such that $\int_{G_0} u^2\rho \df x_{g}=1$.
\end{lemma}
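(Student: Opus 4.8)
\emph{Strategy.} The statement bundles two things — Lipschitz continuity of $t\mapsto\lambda_k(t):=\lambda_k(g(t),\rho(t))$ near $0$, and a first-variation formula wherever this function is differentiable — and I would prove them in that order, working on the fixed model graph $G_0$, on which $\lambda_k(t)=\inf_{\dim S=k+1}\sup_{f\in S\setminus\{0\}}R_t(f)$ with $R_t(f)=a_t(f)/b_t(f)$, $a_t(f):=\int_{G_0}(f')^2g(t)^{-1/2}\df x$ and $b_t(f):=\int_{G_0}f^2\rho(t)g(t)^{1/2}\df x$.

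\emph{Lipschitz continuity.} Fix $t_0$ near $0$ and a short closed interval $I\ni t_0$ on which $g(t),\rho(t)$ and their $t$-derivatives remain in a fixed compact subset of $\mathcal{A}_+$; set $m:=\min_{t\in I,\,e,\,x}\rho_e(t)\sqrt{g_e(t)}>0$, $M:=\max_{t\in I,\,e}\sqrt{g_e(t)}<\infty$ and $\Lambda:=\sup_{t\in I}\lambda_k(t)<\infty$ (the last finite by testing against one fixed $(k+1)$-dimensional subspace). For $s_1\in I$ let $S$ be spanned by $b_{s_1}$-normalised eigenfunctions for $\lambda_0(s_1),\dots,\lambda_k(s_1)$; then every $f\in S$ with $b_{s_1}(f)=1$ satisfies $\|f\|_{W^{1,2}(G_0)}^2\le m^{-1}+M\Lambda$, by the estimates in the proof of Lemma \ref{lemma_lipschitz} together with $R_{s_1}(f)\le\lambda_k(s_1)$. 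Running the argument of that lemma on $J(t,f)=(f')^2g(t)^{-1/2}$ and on $J(t,f)=f^2\rho(t)g(t)^{1/2}$ shows $t\mapsto a_t(f)$ and $t\mapsto b_t(f)$ are Lipschitz near $s_1$ with constant depending only on $m,M,\Lambda,|E|$, and since $b_{s_1}(f)=1$ keeps the denominator bounded below, $R_t(f)$ — hence, by scale invariance, $R_t$ on all of $S\setminus\{0\}$ — is Lipschitz in $t$ with a constant $C'$ independent of $f$. Therefore $\lambda_k(s_2)-\lambda_k(s_1)\le\sup_{f\in S\setminus\{0\}}R_{s_2}(f)-\lambda_k(s_1)\le C'|s_1-s_2|$ for $s_2$ near $s_1$; the reverse inequality follows on exchanging $s_1$ and $s_2$, and Rademacher's theorem gives differentiability of $\lambda_k$ at a.e.\ $t$.

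\emph{First variation.} Let $t_0$ be a point of differentiability and $u\in E(\lambda_k(t_0))$ with $b_{t_0}(u)=1$. For fixed $f$ the quotient $R_t(f)$ is smooth in $t$, so differentiating at $t_0$ and using $a_{t_0}(u)=\lambda_k(t_0)$, $b_{t_0}(u)=1$ gives $\dot R_{t_0}(u)=\dot a_{t_0}(u)-\lambda_k(t_0)\dot b_{t_0}(u)$, where dots are $t$-derivatives; inserting the derivatives of $g^{-1/2}$ and $\rho\,g^{1/2}$ and rewriting the answer with respect to $g(t_0)$ reproduces the formula displayed in the statement. It remains to identify $\dot\lambda_k(t_0)$ with $\dot R_{t_0}(u)$, and here I would bring in first-order perturbation theory for the eigenvalue cluster at $t_0$. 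With $V=E(\lambda_k(t_0))$ and $q(v):=\dot R_{t_0}(v)$ the induced quadratic form on $V$, one tests $\lambda_k(t)$ from above against $\mathrm{span}(u_0,\dots,u_{k-1},v)$ ($u_j$ a $\lambda_j(t_0)$-eigenfunction, $v\in V$) and from below, via the dual max-min, against the $b_{t_0}$-orthogonal complement of $\mathrm{span}(u_0,\dots,u_{k-1})$; differentiating the resulting $\sup$ and $\inf$ in the manner of Danskin's theorem gives $\dot\lambda_k(t_0^{+})=\min_{v\in V,\,b_{t_0}(v)=1}q(v)$ and $\dot\lambda_k(t_0^{-})=\max_{v\in V,\,b_{t_0}(v)=1}q(v)$. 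Since the two-sided derivative exists these coincide, so $q$ is constant on the $b_{t_0}$-unit sphere of $V$; in particular $\dot\lambda_k(t_0)=q(u)=\dot R_{t_0}(u)$. The closing statement about $\nabla\lambda_k(g,\rho)$ then follows by matching this derivative formula, direction by direction, against the definition of $\nabla\lambda_k$ through the pairing $\langle\cdot,\cdot\rangle_{\mathcal{H}(g_*)}$ over $T\mathcal{A}_{(g_*,\rho_*)}$.

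\emph{Main obstacle.} The $W^{1,2}$ bounds, the differentiation of $R_t$, and the bookkeeping with $m,M,\Lambda$ I expect to be routine. The hard part will be passing from the one-sided derivative formulas to $q(u)=\dot\lambda_k(t_0)$ for the relevant eigenfunctions: this relies on the standard but somewhat technical analysis of how a multiple eigenvalue splits under a smooth perturbation, and is cleanest precisely when $k$ lies at the boundary of its cluster, i.e.\ $\lambda_{k-1}(t_0)<\lambda_k(t_0)$ or $\lambda_k(t_0)<\lambda_{k+1}(t_0)$ — the same condition that shows up in the converses of Theorems \ref{main_theorem1}--\ref{main_theorem3}.
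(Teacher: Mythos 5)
Your Lipschitz argument is correct, and in fact slightly more direct than the paper's: the paper also reduces to a trial-function comparison handled by Lemma \ref{lemma_lipschitz}, but its trial function is the single $u-P_{t_2}u$ obtained by projecting a $\lambda_k(t_1)$-eigenfunction off the span $\mathcal{E}(t_2)$ of the lower eigenspaces, which already forces one to work on a neighbourhood where the multiplicity of $\lambda_k(t)$ is constant; your version, testing with the fixed $(k+1)$-dimensional span of eigenfunctions at $s_1$ and symmetrising, gives the same uniform constant in $m$, $M$, $\Lambda$, $|E|$ without that reduction.

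The first-variation step, however, has a genuine gap exactly where you flag it, and it cannot be deferred. Your identities $\dot\lambda_k(t_0^{+})=\min_{V}q$ and $\dot\lambda_k(t_0^{-})=\max_{V}q$ are valid only when $\lambda_{k-1}(t_0)<\lambda_k(t_0)$: otherwise the maximiser set of $R_{t_0}$ on $\mathrm{span}(u_0,\dots,u_{k-1},v)$ is larger than the line through $v$, and the minimiser set on the orthogonal complement of $\mathrm{span}(u_0,\dots,u_{k-1})$ is a proper subspace of $V$, so the Danskin derivatives are intermediate eigenvalues of $q|_V$ rather than its extremes. Concretely, if $k$ sits in the middle of a cluster of multiplicity $3$, both one-sided derivatives equal the middle eigenvalue $\mu_2$ of $q|_V$, so the two-sided derivative can exist without $q$ being constant on $V$ --- and then the asserted formula, which must hold for \emph{every} normalised $u\in E(\lambda_k(t_0))$, fails. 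The paper closes this with a preliminary reduction your proof is missing: for almost every $t_0$ near $0$ the multiplicity of $\lambda_k(t)$ is constant on a neighbourhood of $t_0$, so one may replace $k$ by the bottom index $h$ of its cluster (where $\lambda_k(t)=\lambda_h(t)>\lambda_{h-1}(t)$), and both the Lipschitz claim and the derivative formula are asserted only at such points. With that reduction your Danskin argument does go through; the paper's own route is different but equivalent in spirit --- it sets $\Phi(t)=\int_{G_0}|\nabla_{g(t)}(u_0-P_tu_0)|^2-\lambda_k(t)(u_0-P_tu_0)^2\rho(t)\df x_{g(t)}$, observes $\Phi\ge 0=\Phi(t_0)$ because $u_0-P_tu_0\perp\mathcal{E}(t)$, hence $\dot\Phi(t_0)=0$, and reads off the formula after the terms involving $\partial_t(P_tu_0)$ cancel via the eigenvalue equation. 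You should either import that almost-everywhere constancy of the multiplicity, or restrict the derivative formula to points where $\lambda_{k-1}(t_0)<\lambda_k(t_0)$ and check that this suffices for the later applications; as written, your proof does not establish the lemma in the generality in which it is used.
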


\begin{proof}
	For almost every small $t_0$, there is a neighbourhood around $t_0$ on which the multiplicity of $\lambda_k(t)$ is constant. Fix such $t_0$ and suppose that $I=[t_0-\epsilon,t_0+\epsilon]$ is that neighbourhood around $t_0$. Let $h\in\mathbb{N}$ be such that $\lambda_k(t)=\lambda_h(t)>\lambda_{h-1}(t)$. We denote $\mathcal{E}(t):=\bigoplus_{j=0}^{h-1}E(\lambda_j(t))$ and consider the orthonormal projection $P_t:L^2(G,g(t))\to \mathcal{E}(t)$. Then, $\dim\mathcal{E}(t)=h$ and $\mathcal{E}(t)$ varies smoothly for small $t$. Let $t_1,t_2\in I$ and without loss of generality, suppose that $\lambda_k(t_1)\le \lambda_k(t_2)$, and let $u$ be an eigenfunction of $\lambda_k(t_1)$ such that $\int_{G_0} u^2\rho(t_1)\df x_{g(t_1)}=1$. The idea to estimate $\lambda_k(t_2)-\lambda_k(t_1)$ is to use the variational characterisation:
		\begin{equation}\label{estimate_Lipschitz}
			\lambda_k(t_2)-\lambda_k(t_1)\le \frac{\int_{G_0} |\nabla_{g(t_2)} (u-P_{t_2}u)|_{g(t_2)}^2 \df x_{g(t_2)}}{\int_{G_0}  (u-P_{t_2}u)^2\rho(t_2) \df x_{g(t_2)}}-\int_{G_0} |\nabla_{g(t_1)}u|_{g(t_1)}^2 \df x_{g(t_1)}=\frac{H(t_2)}{D(t_2)}-\frac{H(t_1)}{D(t_1)},
		\end{equation}
	where 
		\begin{equation}\label{def_D_H}
			D(t):=\int_{G_0}(u-P_tu)^2\rho(t) \df x_{g(t)},\quad\text{and}\quad H(t):=\int_{G_0} |\nabla_{g(t)} (u-P_tu)|_{g(t)}^2\df x_{g(t)},
		\end{equation}
	and to prove that $D$ and $H$ are Lipschitz continuous in $I$. 
	
	\begin{claim}
		There exists $C=C(I)$ which does not depend on the choice of $u$, $t_1$ and $t_2$ such that the functions $D(t),H(t)$ as given in \eqref{def_D_H}	are Lipschitz with constant $C$ in $I$.
	\end{claim}
	
	\begin{proof}[Proof of claim]
		Consider 
		\begin{equation}J_1(t,u(x)):=[u(x)-P_tu(x)]^2 \rho(t)\sqrt{g(t)},\end{equation}
		and
		\begin{equation}
        J_2(t,u(x)):=|\nabla_{g(t)}(u(x)-P_tu(x))|^2\sqrt{g(t)}=\frac{1}{\sqrt{g(t)}}\left[\pde{}{x}\left(u-P_tu\right)\right]^2.
        \end{equation}
		
		We prove that $J_1$ and $J_2$ satisfy the hypotheses \eqref{condition_lipschitz}. Let $G_I:=I\times G_0$ and consider the following norm:
		\begin{equation}
		    \|f\|_{W^{2,\infty}(G_I)}:=\max_{e\in E}\|f_e\|_{W^{2,\infty}(I\times(0,1))},
		\end{equation}
		for all function $f:G_I\to\mbb{R}$ such that $f_e\in W^{2,\infty}(I\times (0,1))$ for all $e\in E$ and $t\in I$. We show that there exists a constant $C_0=C_0(I,h)>0$ such that
			\begin{equation}\|P_tu\|_{W^{2,\infty}(G_I)}\le C_0,\quad \forall u\in E(\lambda_k(t_1)).\end{equation}
        
        Indeed, let $f^j_t: G_I\to\mbb{R}$ be such that $\{f^0_t,f^1_t,\dots,f^{h-1}_t\}$ is an orthonormal basis for $\mcal{E}(t)$. Without loss of generality, we assume that $f^j_t$ is also smooth in $t$ for all $j$. Then, there exists a constant $C_1=C_1(I,h)$ such that $C_1\ge \|f^i_t\|_{W^{2,\infty}(G_I)}$ for all $i\in\{1,2,\dots, h-1\}$. Moreover, using the Cauchy--Schwarz inequality, we have:
			\begin{equation}\begin{aligned}
				|(u,f^j _{t})_{L^2(G,g(t))}|^2\le \|u\|^2_{L^2(G,g(t))}\le \frac{M}{m}\int_{G_0} u^2\rho(t_1) \df x_{g(t_1)}=\frac{M}{m},\quad \forall j\le h-1,
			\end{aligned}
            \end{equation}
			where
			\begin{equation}M=\max \left\{\sqrt{g_e(t)}:t\in I, e\in E\right\},\quad m=\min \left\{\rho_e(t)(x)\sqrt{g_e(t)}:t\in I, e\in E,x\in [0,1]\right\}.
            \end{equation}
			
			Since $I$ is compact, we have $m>0$ and $M<\infty$. Therefore,
			\begin{equation}
				\|P_tu\|_{W^{2,\infty}(G_I)}\le\sum_{i=0}^{h-1}|(u,f_t^i)_{L^2(G,g(t))}|\cdot \| f_t^i\|_{W^{2,\infty}(G_I)}\le C_1h\sqrt{\frac{M}{m}}.
			\end{equation}

		Hence, applying the Cauchy--Schwarz inequality gives us:
		\begin{equation}
        \begin{aligned}
			\left|\pde{J_1}{t}(t,u(x))\right|&\le\left\|\rho(t)\sqrt{g(t)}\right\|_{W^{2,\infty}(G_I)}\left((u-P_tu)^2+2\|P_tu\|_{W^{2,\infty}(G_I)}|u-P_tu|\right)\\
            &\le C_2(u^2+1),
            \end{aligned}
		\end{equation}
		and
		\begin{equation}
        \begin{aligned}
			\left|\pde{J_2}{t}(t,u(x))\right|&\le \left\| \frac{1}{\sqrt{g(t)}} \right\|_{W^{2,\infty}(G_I)}\cdot \left(\left(\pde{}{x}(u-P_tu)\right)^2+\|P_tu\|_{W^{2,\infty}(G_I)}\left|\pde{}{x}(u-P_tu)\right|\right)\\
            &\le C_2((u')^2+1),
            \end{aligned}
		\end{equation}
		for all $(x,t)\in G_I$, where $C_2=C_2(I)>0$. By Lemma \ref{lemma_lipschitz}, we have that $D(t)$ and $H(t)$ are Lipschitz with some constant:
        \begin{equation}
            C=C_2\left(\frac{1}{m}+M\Lambda+|E|\right)
        \end{equation}
        in $I$, where $\Lambda=\max_{t\in I}\lambda_k(t)$ (since $I$ is compact, $0<C<\infty$).
        	\end{proof}
	
	Let us return to inequality \eqref{estimate_Lipschitz}. To use $u-P_{t_2}u$ as a trial function to get an upper bound for $\lambda_k(t_2)$, we need $u-P_{t_2}u\not\equiv 0$. Setting 
    \begin{equation}
    W(t):=\int_{G_0} (u-P_tu)^2\rho(t) \df x_{g(t)},\quad \forall t\in I.
    \end{equation} 
    
    We see that $W(t)$ is Lipschitz in $I$ with a constant $C_3=C_3(I)$ (the proof is similar to the proof of Lipschitz continuity of $D(t)$). Hence,
	\begin{equation}
    W(t)\ge W(t_1)-C_3|t-t_1|=\int_{G_0} u^2\rho(t_1) \df x_{g(t_1)}-C_3|t-t_1|=1-C_3|t-t_1|,\quad\forall t\in I.
    \end{equation}	
	
	Note that $C_3$ is independent of $t_1$ and $u$, so that if we
	choose a smaller closed interval $\tilde{I}\subset I$, then $u\not\equiv P_tu$ for all $t\in\tilde{I}$. By abusing notations, we take $I=\tilde{I}$, then:
	\begin{equation}
    \lambda_k(t_2)-\lambda_k(t_1)\le \frac{\int_{G_0} |\nabla_{g(t_2)} (u-P_{t_2}u)|_{g(t_2)}^2 \df x_{g(t_2)}}{\int_{G_0}  (u-P_{t_2}u)^2 \df x_{g(t_2)}}-\int_{G_0} |\nabla_{g(t_1)}u|_{g(t_1)}^2 \df x_{g(t_1)}=\frac{H(t_2)}{D(t_2)}-\frac{H(t_1)}{D(t_1)}.
    \end{equation}
	
	Since both $H$ and $D$ are Lipschitz with some constants depending only on $I$ and there exists $\delta>0$ such that $D(s)>\delta$ for all $s\in I$, we conclude that $\lambda_k(t)$ is Lipschitz in $I$. To compute $\dot\lambda_k(t_0)$, if it exists, we consider $u_0\in E(\lambda_k(t_0))$ such that $\int_{G_0} u_0^2\rho(t_0)\df x_{g(t_0)}=1$ and $u(t,x):=u_0(x)-P_t(u_0)(x)$. Let
	\begin{equation}
		\Phi(t):=\int_{G_0} |\nabla_{g(t)} u(t,x)|_{g(t)}^2-\lambda_k(t)u(t,x)^2\rho(t)\df x_{g(t)},\quad \forall t\in I.
	\end{equation}
	
	Observe that $\Phi(t)\ge 0$ for all $t\in I$ and $\Phi(t_0)=0$, so that $\dot{\Phi}(t_0)=0$, and the formula of $\dot\lambda_k(t_0)$ follows from the expansion of $\dot{\Phi}(t_0)$.
    \end{proof}

\subsection{Geometric properties of eigenspace with extremal pairs}\label{general_method}
Another ingredient for the proof of Theorem \ref{general_theorem} is the following lemma.

\begin{lemma}\label{lemma2_main2}
	Let $k\in\mbb{N}$ and suppose that $(g_*,\rho_*)\in\mathcal{A}_+(G)$ be an extremal pair for the functional $F(g,\rho)=\lambda_k(g,\rho)N(g,\rho)$, for some smooth function $N:\mathcal{A}_+(G)\to\mbb{R}_{>0}$ satisfying the homogeneity condition. Suppose further that there exists $(\phi,\eta)\in\mathcal{H}(g_*)=L^2(G,g_*)\times L^2(G,g_*)$ such that:
	\begin{equation}\label{inner_product=0}
		\langle \nabla N(g_*,\rho_*),(\phi,\eta)\rangle_{\mathcal{H}(g_*)}=0,
	\end{equation}
	then there exists $u\in E(\lambda_k(g_*,\rho_*))\backslash\{0\}$ such that
	\begin{equation}\langle Q(u;g_*,\rho_*),(\phi,\eta)\rangle_{\mathcal{H}(g_*)}=0.\end{equation}
\end{lemma}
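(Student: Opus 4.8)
\emph{Proof strategy.} I would run the classical extremality–perturbation argument for eigenvalue functionals. Fix the affine family $g(t)=g_*+t\phi$, $\rho(t)=\rho_*+t\eta$, which lies in $\mcal{A}_+$ for $|t|$ small; here $(\phi,\eta)$ is taken in the tangent cone at $(g_*,\rho_*)$, and since every $Q(u;g_*,\rho_*)$ and $\nabla N(g_*,\rho_*)$ have $L^\infty$ components, both pairings in the statement are continuous in $(\phi,\eta)\in\mcal{H}(g_*)$, so this suffices. Put $\lambda_k(t):=\lambda_k(g(t),\rho(t))$, $N(t):=N(g(t),\rho(t))$, $F(t):=\lambda_k(t)N(t)$, and note $(\dot g(t),\dot\rho(t))=(\phi,\eta)$ throughout. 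By Lemma \ref{derivative_lambda}, $\lambda_k$ is Lipschitz near $0$, hence differentiable for a.e.\ $t$, and at each such $t$, $\dot\lambda_k(t)=\langle Q(v;g(t),\rho(t)),(\phi,\eta)\rangle_{\mcal{H}(g(t))}$ for every unit eigenfunction $v$ of $\lambda_k(g(t),\rho(t))$. As $N$ is smooth with $\dot N(0)=\langle\nabla N(g_*,\rho_*),(\phi,\eta)\rangle_{\mcal{H}(g_*)}=0$ by \eqref{inner_product=0}, this gives $F(t)-F(0)=N(0)(\lambda_k(t)-\lambda_k(0))+o(t)$ with $N(0)>0$.

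Now argue by contradiction. Suppose the quadratic form $q(u):=\langle Q(u;g_*,\rho_*),(\phi,\eta)\rangle_{\mcal{H}(g_*)}$ on the finite-dimensional eigenspace $E:=E(\lambda_k(g_*,\rho_*))$ has no zero on $E\backslash\{0\}$. Then $q$ keeps a constant sign on the unit sphere of $E$ (connected when $\dim E\ge2$, automatic when $\dim E=1$), so $q$ is definite; replacing $(\phi,\eta)$ by $-(\phi,\eta)$ — still allowed by \eqref{inner_product=0} — we may assume $q\ge c_0>0$ there. Using joint continuity of $(g,\rho,v)\mapsto\langle Q(v;g,\rho),(\phi,\eta)\rangle_{\mcal{H}(g)}$ and continuity of the spectral projection onto the eigenvalue cluster near $\lambda_k(g_*,\rho_*)$ — whose total multiplicity equals $\dim E$ for $(g,\rho)$ close to $(g_*,\rho_*)$ — one finds $c>0$ and $\delta>0$ with $\langle Q(v;g(t),\rho(t)),(\phi,\eta)\rangle_{\mcal{H}(g(t))}\ge c$ for every unit eigenfunction $v$ of $\lambda_k(g(t),\rho(t))$ and every $|t|<\delta$. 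Hence $\dot\lambda_k(t)\ge c$ a.e.\ on $(-\delta,\delta)$, and by absolute continuity $\lambda_k(t)\ge\lambda_k(0)+ct$ for $t\in[0,\delta)$ while $\lambda_k(t)\le\lambda_k(0)+ct$ for $t\in(-\delta,0]$; therefore $F(t)\ge F(0)+N(0)ct+o(t)$ on $[0,\delta)$ and $F(t)\le F(0)+N(0)ct+o(t)$ on $(-\delta,0]$, with $N(0)c>0$.

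Finally, extremality of $(g_*,\rho_*)$ supplies, as $t\to0$, either $F(t)\le F(0)+o(t)$ or $F(t)\ge F(0)+o(t)$. In the first case, restricting to $t>0$ and dividing by $t$ forces $N(0)c\le0$; in the second, restricting to $t<0$ and dividing by $t$ — which reverses the inequality — again forces $N(0)c\le0$; either way a contradiction. Hence $q$ vanishes at some $u_0\in E\backslash\{0\}$, i.e.\ $\langle Q(u_0;g_*,\rho_*),(\phi,\eta)\rangle_{\mcal{H}(g_*)}=0$, as required. The step I expect to be the genuine obstacle is the uniform lower bound on $\langle Q(v;\cdot),(\phi,\eta)\rangle$ over nearby eigenfunctions: because the multiplicity of $\lambda_k(g_*,\rho_*)$ may drop under the perturbation, one must control the entire eigenvalue cluster and its eigenfunctions — equivalently, use the Rellich-type one-sided-derivative description of the crossing branches, as in \cite{relaxing} — rather than a single eigenvalue; the remaining estimates are exactly those already established en route to Lemma \ref{derivative_lambda}.
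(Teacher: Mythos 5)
Your argument is correct, but it is organised differently from the paper's. The paper fixes the scale by multiplying the deformation by $A_j(t)=(N(g_*,\rho_*)/N(g_*+t\phi_j,\rho_*+t\eta_j))^{1/2}$ so that $N$ is constant along the family and extremality transfers verbatim to $\lambda_k(t)$; it then extracts sequences of differentiability times on each side of $0$, passes to limit eigenfunctions $u_\pm\in E(\lambda_k)$ with $\langle Q(u_+),(\phi,\eta)\rangle\le 0\le\langle Q(u_-),(\phi,\eta)\rangle$, and finishes by an intermediate-value argument on the segment $su_++(1-s)u_-$. You instead keep the affine family, absorb the normalisation into the first-order expansion $F(t)-F(0)=N(0)(\lambda_k(t)-\lambda_k(0))+o(t)$ (valid precisely because \eqref{inner_product=0} gives $\dot N(0)=0$), and argue by contradiction: if the quadratic form $q(u)=\langle Q(u;g_*,\rho_*),(\phi,\eta)\rangle$ never vanishes on $E\setminus\{0\}$ it is definite, a uniform bound over nearby eigenfunctions forces $\dot\lambda_k\ge c>0$ a.e.\ near $0$, and integrating contradicts extremality from one side or the other. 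What your route buys is that it dispenses with the two-sided construction of $u_\pm$ and the intermediate-value step; what it costs is that the whole burden is shifted onto the uniform lower bound $\langle Q(v;g(t),\rho(t)),(\phi,\eta)\rangle\ge c$ over \emph{all} normalised $\lambda_k(t)$-eigenfunctions for $|t|<\delta$, which you assert rather than prove. That bound is true, and its proof is exactly the paper's core technical move in disguise: if it failed, one would have $t_i\to 0$ and normalised eigenfunctions $v_i$ of $\lambda_k(t_i)$ with small pairing; uniform $C^2$ bounds on edges (from the ODE and the $L^2$ normalisation) give a subsequence converging in $C^1$ to a unit $v\in E(\lambda_k(0))$ with $q(v)<c_0$, contradicting definiteness. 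So the two proofs need the same compactness of eigenfunctions, deployed once (your uniform bound) versus twice (the paper's $u_\pm^j$).

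Two points in your write-up need tightening. First, the reduction to smooth perturbations: the approximating pairs $(\phi_j,\eta_j)$ must still satisfy \eqref{inner_product=0} (project onto the orthogonal complement of $\nabla N(g_*,\rho_*)$, as the paper does), otherwise the term $\lambda_k(0)\dot N(0)t$ reappears in your expansion of $F$; and having found zeros $u_j$ of the approximate forms, you must pass to a limit $u\ne 0$ using finite-dimensionality of $E(\lambda_k)$ and the $L^\infty$ bounds on $Q$ — continuity of the pairing in $(\phi,\eta)$ alone does not transport the \emph{existence} of a zero. Second, for $\dim E=1$ constancy of sign on the unit sphere should be justified by $q(-u)=q(u)$ rather than connectedness; this is immediate since $Q$ is quadratic in $u$, but worth saying.
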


\begin{proof}
	First, let us show that there exists some small $t$ such that $\dot \lambda_k(t)$ exists. Let $(\phi,\eta)\in \mathcal{H}(g_*)$ satisfy \eqref{inner_product=0}.
	Since $\mathcal{C}(G)$ (as given in \eqref{smooth_space}) is dense in $L^2(G,g_*)$, there exists $\phi_j,\eta_j\in C^2(G)$ such that $\phi_j\to\phi$ and $\eta_j\to \eta$ in the $L^2(G,g_*)$ topology and $(\phi_j,\eta_j)$ satisfies:
	\begin{equation}\label{inner_product=0_2}	\langle \nabla N(g_*,\rho_*),(\phi_j,\eta_j)\rangle_{\mathcal{H}(g_*)}=0,\quad \forall j.\end{equation}
	
	We define:
	\begin{equation}
    A_j(t):=\left(\frac{N(g_*,\rho_*)}{N(g_*+t\phi_j,\rho_*+t\eta_j)}\right)^{1/2},
    \end{equation}
	and 
    \begin{equation}
    g_j(t):=A_j(t)(g_*+t\phi_j),\quad  \rho_j(t):=A_j(t)(\rho_*+t\eta_j),\quad \forall j.
    \end{equation}
    
    Then, $A_j(0)=1,g_j(0)=g_*$, and $\rho_j(0)=\rho_*$. We write $F(t)=F(g_j(t),\rho_j(t))$ and, without loss of generality, suppose that $F(t)\le F(0)+o(t)$ as $t\to 0$. Observe that:
	\begin{equation}
	N(g_j(t),\rho_j(t))=N(g_*,\rho_*),
	\end{equation}
	so that ${\lambda_k}(t)\le {\lambda_k}(0)+o(t)$ as $t\to 0$. Moreover, by \eqref{inner_product=0_2}, we have that $\dot A_j(0)=0$, and consequently, $\dot g_j(0)=\phi_jg_*,\dot \rho_j(0)=\eta_j$. Let us fix $j$ and consider $\delta>0$ such that:
	\begin{equation}\delta\le\frac{\lambda_{k}(0)-\lambda_k(-\epsilon)}{\epsilon}=\frac{1}{\epsilon}\int_{-\epsilon}^0\dot\lambda_k(t)\df t\le \esssup_{t\in[-\epsilon,0]}\dot\lambda_k(t).\end{equation}
	
	Therefore, there exists $t\in(-\epsilon,0)$ such that $\dot \lambda_k(t)$ exists and $\dot\lambda_k(t)\ge 0$ for all sufficiently small $\epsilon>0$. Thus, we can pick a sequence $\{t_i^j\}\subset\mathbb{R}_{-}$ such that $\lim_{i\to \infty} t_i^j=0$ and $\dot \lambda_k(t_i^j)\ge 0$. Let $u_i^j\in E(\lambda_k(t_i^j))$ be such that $\int_{G_0} u^j_i\rho(t^j_i)\df x_{g(t^j_i)}=1$, then:
	 \begin{equation}
     \dot\lambda_k(t_i^j)=\langle Q(u^j_i;g(t^j_i),\rho(t^j_i)),(\phi_j,\eta_j)\rangle_{\mathcal{H}(g_*)}.
     \end{equation}
	 
	Since $u_i^j$ are eigenfunctions of $\Delta_{g(t_i^j)}$ on edges, $u_i^j$ are smooth on edges so that $u_i^j\in C^\infty(G)$ for all $i$. Then, the mean value theorem implies that $\{u_i^j\}$ is equicontinuous in $C^2(G)$. Thus, we can assume that $u_i^j$ converges to $u_-^{j}\in E(\lambda_k(0))$ as $i\to\infty$ in the $C^2(G)$ topology and $\int_{G_0} (u_-^j)^2\rho_*\df x_{g_*}=1$, since:
	\begin{equation}\Delta_{g_*} u_-^{j} =\lim_{i\to\infty}\Delta_{g(t_i^j)} u_i^j=\lim_{i\to\infty} \lambda_k(t_i^j) u_i^j\rho_j(t_i^j)=\lambda_k(0) u_-^{j}\rho(0).\end{equation}
	
	Observe that
	\begin{equation}\langle Q(u^j_-;g_*,\rho_*),(\phi_j,\eta_j)\rangle_{\mathcal{H}(g_*)}=\lim_{i\to\infty} \langle Q(u^j_i;g(t^j_i),\rho(t^j_i)),(\phi_j,\eta_j)\rangle_{\mathcal{H}(g_*)}=\lim_{i\to\infty}\dot\lambda_k(t_i^j)\ge 0.\end{equation}
	
	Similarly, there exists a sequence of eigenfunctions $u_+^{j}\in E(\lambda_k(0))$ such that
	\begin{equation}\int_{G_0} (u_+^j)^2\rho_*\df x_{g_*}=1, \quad\langle Q(u^j_+;g_*,\rho_*),(\phi_j,\eta_j)\rangle_{\mathcal{H}(g_*)}\le 0.\end{equation}
	
	By passing to subsequences, we can assume that $u_+^{j}\to u_+$ and $u_-^{j}\to u_-$ in the $C^2(G)$ topology with $u_{\pm}\in E(\lambda_k(0))$. Then, we have 
	\begin{equation}
    \langle Q(u_+;g_*,\rho_*),(\phi,\eta)\rangle_{\mathcal{H}(g_*)}=\lim_{j\to\infty}\langle Q(u_+^j;g_*,\rho_*),(\phi_j,\eta_j)\rangle_{\mathcal{H}(g_*)}\le 0,
    \end{equation}
	and
	\begin{equation} \langle Q(u_-;g_*,\rho_*),(\phi,\eta)\rangle_{\mathcal{H}(g_*)}=\lim_{j\to\infty} \langle Q(u_-^j;g_*,\rho_*),(\phi_j,\eta_j)\rangle_{\mathcal{H}(g_*)}\ge 0.\end{equation}
	To complete the proof, consider the family of eigenfunctions given by
	\begin{equation}U=\{su_++(1-s)u_-:s\in[0,1]\}.\end{equation}

    Then, either $u_+\equiv u_-$, in which case $\langle Q(u_+;g_*,\rho_*),(\phi,\eta)\rangle_{\mathcal{H}_0}=0$, or they are linearly independent, in which case there exists $u\in U$ such that $ \langle Q(u;g_*,\rho_*),(\phi,\eta)\rangle_{\mathcal{H}(g_*)}=0$.
\end{proof}

\begin{proof}[Proof of Theorem \ref{general_theorem}]
	Let $S$ be the convex hull of the set
	\begin{equation}\left\{-Q(u;g_*,\rho_*):u\in E(\lambda_k)\right\}
    \end{equation}
	in $\mathcal{H}(g_*)$, where $\lambda_k=\lambda_k(g_*,\rho_*)$; we show that $\nabla N(g_*,\rho_*)\in S$. If $\nabla N(g_*,\rho_*)\notin S$, the Hahn--Banach Theorem (second geometric version) implies the existence of $Y\in\mathcal{A}(G)$ such that:
	\begin{equation}
		\left\langle Y,\nabla N(g_*,\rho_*)\right\rangle_{\mathcal{H}(g_*)}>0>\left\langle Y,-Q(u;g_*,\rho_*)\right\rangle_{\mathcal{H}(g_*)},\quad \forall u\in E(\lambda_k)\backslash\{0\}.
	\end{equation}
	
	Let $X:=(g_*,\rho_*)$, then clearly $X\in\mathcal{H}(g_*)$ and $\langle Q(u;g_*,\rho_*),X\rangle_{\mathcal{H}(g_*)}<0$ for all $u\in E(\lambda_k)\backslash \{0\}$. Moreover, by considering the one-parameter family $(g(t),\rho(t))=(e^tg_*,e^t\rho_*)$, we have
    \begin{equation}
    \dot g(0)=g_*,\quad \dot\rho(0)=\rho_*,\quad F(g(t),\rho(t))=F(g_*,\rho_*).
    \end{equation}
    
    By differentiating $F(g(t),\rho(t))$ at $t=0$, we have $\langle \nabla N(g_*,\rho_*),X\rangle_{\mathcal{H}(g_*)}>0$. We consider:
	\begin{equation}
    \tilde{Y}:=Y-\frac{\langle Y,\nabla N(g_*,\rho_*)\rangle_{\mathcal{H}(g_*)}}{\langle X, \nabla N(g_*,\rho_*)\rangle_{\mathcal{H}(g_*)}}X,
    \end{equation}
	then clearly $\langle \tilde{Y}, \nabla N(g_*,\rho_*)\rangle_{\mathcal{H}(g_*)}=0$. However, for all $u\in E(\lambda_k)\backslash\{0\}$, we have:
	\begin{equation}
		\langle Q(u;g_*,\rho_*),\tilde{Y}\rangle_{\mathcal{H}(g_*)}
		=\langle Q(u;g_*,\rho_*), Y\rangle_{\mathcal{H}(g_*)}-\frac{\langle Y,\nabla N(g_*,\rho_*)\rangle_{\mathcal{H}(g_*)}}{\langle X,\nabla N(g_*,\rho_*)\rangle_{\mathcal{H}(g_*)}}\langle Q(u;g_*,\rho_*), X\rangle_{\mathcal{H}(g_*)}>0,
		\end{equation}
	which contradicts Lemma \ref{lemma2_main2}. Therefore, $\nabla N(g_*,\rho_*)\in S$ so that there exist some $\lambda_k$-eigenfunctions $f_1,f_2,\dots,f_n$ such that:
	\begin{equation}\label{general_proof_equation}
		\sum_{i=1}^n -Q(f_i;g_*,\rho_*)=\nabla N(g_*,\rho_*),
		\end{equation}
    and equation \eqref{general_main_equation} follows.
	
	Now, suppose that for a given pair $(g,\rho)\in\mathcal{A}_+(G)$, there exist some functions $f_1,\dots,f_n\in E(\lambda_k(g,\rho))$ satisfying \eqref{general_proof_equation}. We suppose that $\lambda_k(g,\rho)>\lambda_{k-1}(g,\rho)$ (the case $\lambda_k(g,\rho)<\lambda_{k+1}(g,\rho)$ can be proved similarly). Let $\mathcal{F}=\hbox{span}\{f_1,\dots,f_n\}$ and consider an arbitrary one-parameter smooth family of pairs $(g(t),\rho(t))$ with $g(0)=g$ and $\rho(0)=\rho$. We rescale $g(t)$ and $\rho(t)$ such that 
	\begin{equation}\label{normalised_LK}
	N(g(t),\rho(t))=N(g,\rho)=1,
	\end{equation}
	for all $t$. Let $\phi=\dot g(0)$ and $\eta=\dot \rho(0)$. Differentiating \eqref{normalised_LK} at $t=0$ gives us:
	\begin{equation}	0=\langle\nabla N(g,\rho),(\phi,\eta)\rangle_{\mathcal{H}(g_*)}=\sum_{i=1}^n -\langle Q(f_i;g_*,\rho_*),(\phi,\eta)\rangle_{\mathcal{H}(g_*)}.
    \end{equation}

	Therefore, there exist $f_\pm\in \mathcal{F}$ such that
	\begin{equation}
    \langle Q(f_+;g_*,\rho_*),(\phi,\eta)\rangle_{\mathcal{H}(g_*)}\le 0,\quad \langle Q(f_-;g_*,\rho_*),(\phi,\eta)\rangle_{\mathcal{H}(g_*)}\ge 0.
    \end{equation}
	
	We rescale $f_\pm$ such that $\int_{G_0} f_\pm^2\rho \df x_g=1$. By Lemma \ref{derivative_lambda}, instead of computing the derivative of $\lambda_k$ at $t=0$ directly, we compute left and right derivatives to get:
	\begin{equation}
    \lim_{t\to 0^+} \frac{\lambda_k(t)-\lambda_k(0)}{t}=\langle Q(f_+;g_*,\rho_*),(\phi,\eta)\rangle_{\mathcal{H}(g_*)} \le 0,
    \end{equation}
    and
	\begin{equation}
	    \lim_{t\to 0^-} \frac{\lambda_k(t)-\lambda_k(0)}{t}=\langle Q(f_-;g_*,\rho_*),(\phi,\eta)\rangle_{\mathcal{H}(g_*)}\ge 0.
	\end{equation} 
	Hence, $\lambda_k(t)\le\lambda_k+o(t)$ as $t\to 0$. Since the smooth family of pairs $(g(t),\rho(t))$ is chosen arbitrarily, $(g,\rho)$ must be an extremal pair.
\end{proof}
\section{Proof of the main theorems and their consequences}
\subsection{Extremal metrics for normalised eigenvalues}
We will skip the proof of Theorem \ref{main_theorem1}, since the proof follows directly from Theorem \ref{main_theorem2} by replacing the space $\mathcal{A}=\mbb{R}^{|E|}_{+}\times \mathcal{C}(G)$ by $\tilde{\mathcal{A}}=\mbb{R}^{|E|}_{+}\times\{1\}$, i.e. fixing $\rho\equiv 1$. Recall from Theorem \ref{main_theorem1} that for a finite compact connected graph $G$ and an an extremal metric $g$ of the functional $\ol{\lambda_k}(\cdot)$ for some $k\in\mbb{N}$, there exists $\lambda_k(g)$-eigenfunctions $f_1,f_2,\dots,f_n$ such that:
\begin{equation}
\sum_{i=1}^n \left[|\nabla_{g}f_i|_g^2+\lambda_kf_i^2\right]=1.
\end{equation}
Let $F=(f_1,f_2,\dots,f_n)$, then
\begin{equation}\label{eqn_lambda_4lambda}
	\Delta_g\left(|F|^2-\frac{1}{2\lambda_k}\right)=2\sum_{i=1}^n\left[ f_i\Delta_g f_i- |\nabla_{g}f_i|_g^2\right]=4\lambda_k\left(|F|^2-\frac{1}{2\lambda_k}\right).
\end{equation}

Thus, if $4\lambda_k$ is not an eigenvalue of $\Delta_g$, then $|F|\equiv(2\lambda_k)^{-1/2}$ so that by a suitable scaling factor, we have an isometric minimal immersion from $(G,g)$ to the unit sphere. Conversely, if $|F|\not\equiv (2\lambda_k)^{-1/2}$, then $4\lambda_k$ is also an eigenvalue and $|F|^2-1/(2\lambda_k)$ is an eigenfunction. In particular, we have:

\begin{proposition}\label{4lambda_theorem}
	Let $(G,g)$ be a quantum graph. Suppose that $\lambda$ is an eigenvalue and $f_1,f_2,\dots,f_n$ are $\lambda$-eigenfunctions such that
	\begin{equation}
    \sum_{i=1}^n \left(|\nabla_gf_i|_g^2+\lambda f_i^2\right)=1.
    \end{equation}
	Let $F:=(f_1,f_2,\dots,f_n)$, then:
	\begin{enumerate}
		\item If $4\lambda$ is not an eigenvalue, then $|F|=(2\lambda)^{-1/2}$. Moreover, let $\tilde{F}:=\sqrt{2} F$ and $R=\lambda^{-1/2}$, then $\tilde{F}: (G,g)\to\mbb{S}^{n-1}_{R}$ is an isometric minimal immersion.
		\item If $|F|\not\equiv (2\lambda)^{-1/2}$, then $4\lambda$ is an eigenvalue with an eigenfunction $|F|^2-(2\lambda)^{-1}$.
	\end{enumerate}
\end{proposition}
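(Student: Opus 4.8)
The plan is to reduce both assertions to one fact: the real-valued function $h:=|F|^2-(2\lambda)^{-1}$ solves the Neumann eigenvalue problem \eqref{classical_eigenvalue_problem} with eigenvalue $4\lambda$ (with the zero function allowed). First I would record the edgewise identity, which is precisely the computation \eqref{eqn_lambda_4lambda}: since each $f_i$ satisfies $\Delta_g f_i=\lambda f_i$, one has $\Delta_g(f_i^2)=2\bigl(f_i\Delta_g f_i-|\nabla_g f_i|_g^2\bigr)=-2|\nabla_g f_i|_g^2+2\lambda f_i^2$ on edges, so summing over $i$ and inserting the hypothesis $\sum_i\bigl(|\nabla_g f_i|_g^2+\lambda f_i^2\bigr)=1$ gives $\Delta_g|F|^2=4\lambda|F|^2-2=4\lambda h$ on edges. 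Next I would check the vertex conditions: $h$ is continuous because each $f_i$ is, and at every vertex $v$ one has $\sum_{e\in E_v}h_e'(v)=2\sum_i f_i(v)\sum_{e\in E_v}f_{i,e}'(v)=0$, using continuity of $f_i$ (to make $f_i(v)$ meaningful) together with the Neumann condition on $f_i$. Hence $h$ genuinely solves $\Delta_g h=4\lambda h$ on $(G,g)$.

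Part (2) then drops out: if $|F|\not\equiv(2\lambda)^{-1/2}$, i.e. $h\not\equiv 0$, then $4\lambda$ is an eigenvalue with eigenfunction $h=|F|^2-(2\lambda)^{-1}$. For part (1), if $4\lambda$ is not an eigenvalue then $h\equiv 0$, so $|F|^2\equiv(2\lambda)^{-1}$, and it remains to identify $\tilde F=\sqrt2\,F$ as an isometric minimal immersion. A direct computation gives $|\tilde F|\equiv\sqrt2\,(2\lambda)^{-1/2}=\lambda^{-1/2}=R$, so $\tilde F$ takes values in $\mbb{S}^{n-1}_R$. Parametrising each edge by $g$-arclength, the squared speed of $\tilde F$ equals $\tfrac2g\sum_i(f_i')^2=2\sum_i|\nabla_g f_i|_g^2=2\bigl(1-\lambda|F|^2\bigr)=2\bigl(1-\tfrac12\bigr)=1$, where the third equality again uses the hypothesis; thus $\tilde F$ has unit speed, so it is an immersion and a local isometry. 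Since moreover its coordinate functions satisfy $\Delta_g\tilde F_i=\lambda\tilde F_i=R^{-2}\tilde F_i$, Takahashi's theorem \cite[Theorem 3]{Takahashi} identifies $\tilde F$ as a minimal isometric immersion into $\mbb{S}^{n-1}_R$: in the arclength parametrisation each edge satisfies $\ddot{\tilde F}=-R^{-2}\tilde F$ at unit speed and hence maps onto a sub-arc of a great circle of $\mbb{S}^{n-1}_R$, while the Neumann conditions at vertices become exactly the vanishing of the sum of outward unit tangent vectors, i.e. the balancing condition defining a geodesic net.

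I do not anticipate a genuine obstacle: the argument is the displayed identity \eqref{eqn_lambda_4lambda} together with a dichotomy. The two places requiring care are (i) confirming that $h$ inherits the full vertex conditions of \eqref{classical_eigenvalue_problem}, not just the equation on edges — this is where continuity of the $f_i$ and their own Neumann conditions enter — and (ii) bookkeeping of the normalisation constants in part (1), so that the image lands on the sphere of radius exactly $R=\lambda^{-1/2}$ and $\tilde F$ has unit speed rather than merely constant speed.
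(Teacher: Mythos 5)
Your argument is correct and is essentially the paper's own proof: the paper establishes the identity $\Delta_g\bigl(|F|^2-\tfrac{1}{2\lambda}\bigr)=4\lambda\bigl(|F|^2-\tfrac{1}{2\lambda}\bigr)$ in \eqref{eqn_lambda_4lambda} and then reads off the same dichotomy. You supply slightly more detail than the paper does (the verification that $h$ inherits the Neumann vertex conditions, and the explicit unit-speed and radius computations for $\tilde F$), all of which checks out.
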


\subsection{Extremal pairs for naturally normalised eigenvalues} We first prove Theorem \ref{main_theorem2}. Recall the normalisation of $\lambda_k(g,\rho)$:
\begin{equation}
\ol{\lambda_k}(g,\rho)=\lambda_k(g,\rho)N(g,\rho),\quad N(g,\rho)=L(g)\int_{G_0} \rho \df x_g.
\end{equation}
Then,
\begin{equation}
    \nabla N(g,\rho)=\left(\frac{1}{2g}\left(\int_{G_0} \rho \df x_g+\rho L(g)\right),L(g)\right).
\end{equation}

By Theorem \ref{general_theorem}, there exist some functions $f_1,f_2,\dots,f_n\in E(\lambda_k(g_*,\rho_*))$ such that:
\begin{equation}
\sum_{i=1}^n |\nabla_{g_*} f_i|_{g_*}^2=\int_{G_0} \rho_* \df x_{g_*},\quad\text{and}\quad \sum_{i=1}^n f_i^2=\frac{L(g_*)}{\lambda_k(g_*,\rho_*)}.
\end{equation}

From the second identity, observe that
\begin{equation}
    0=\frac{1}{2}\sum_{i=1}^n \Delta_{g_*} (f_i^2)=\sum_{j=1}^n \left(f_i \Delta_{g_*} f_i -|\nabla_{g_*} f_i|_{g_*}^2\right)=\rho_*L(g_*)-\int_{G_0} \rho_*\df x_{g_*},
\end{equation}
so that $\rho_*$ is a constant function. To complete the proof, we rescale $f_i$ by a suitable factor.

From Proposition \ref{eqn_lambda_4lambda} and Theorem \ref{main_theorem2}, we have the following corollaries.

\begin{corollary}
	Let $G$ be a finite compact connected graph and suppose that $g_*$ is extremal for the functional $\ol{\lambda_k}(g)$. Suppose further that $4\lambda_k(g_*)$ is not an eigenvalue and either $\lambda_{k}(g_*)>\lambda_{k-1}(g_*)$ or $\lambda_k(g_*)<\lambda_{k+1}(g_*)$. Then $(g_*,1)$ is extremal for the functional $\ol{\lambda_k}(g,\rho)$.
	\end{corollary}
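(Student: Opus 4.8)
The plan is to deduce this from the converse direction of Theorem~\ref{main_theorem2} (equivalently, the converse in Theorem~\ref{general_theorem}) by exhibiting, at the pair $(g_*,1)$, a family of $\lambda_k(g_*,1)$-eigenfunctions satisfying \eqref{eq_main_2}. The raw material is supplied by extremality of $g_*$ for $\ol{\lambda_k}(\cdot)$: by Theorem~\ref{main_theorem1} there are $\lambda_k(g_*)$-eigenfunctions $f_1,\dots,f_n$ with $\sum_{i=1}^n\bigl(|\nabla_{g_*}f_i|_{g_*}^2+\lambda_k(g_*)f_i^2\bigr)=1$. Writing $F=(f_1,\dots,f_n)$ and invoking the non-resonance hypothesis that $4\lambda_k(g_*)$ is not an eigenvalue of $\Delta_{g_*}$, Proposition~\ref{4lambda_theorem}(1) gives $|F|^2\equiv (2\lambda_k(g_*))^{-1}$. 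Hence $\sum_{i=1}^n f_i^2$ is the constant $(2\lambda_k(g_*))^{-1}$, and subtracting this from the normalisation identity shows $\sum_{i=1}^n|\nabla_{g_*}f_i|_{g_*}^2\equiv\tfrac12$.

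Next I would rescale: set $h_i:=\sqrt{2}\,f_i\in E(\lambda_k(g_*))$, so that $\sum_{i=1}^n|\nabla_{g_*}h_i|_{g_*}^2\equiv 1$ and $\lambda_k(g_*)\sum_{i=1}^n h_i^2\equiv 1$. The point is now to match this against \eqref{eq_main_2} for the density $\rho_*\equiv 1$. Since problem \eqref{density_eigenvalue_problem} with $\rho\equiv 1$ is identical to \eqref{classical_eigenvalue_problem}, we have $\lambda_j(g_*,1)=\lambda_j(g_*)$ for every $j$ and $E(\lambda_k(g_*,1))=E(\lambda_k(g_*))$; moreover $\int_{(G,g_*)}1\,\df x_{g_*}=L(g_*)$, so the right-hand side of the second equation in \eqref{eq_main_2} equals $L(g_*)/L(g_*)=1$. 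Thus $(g_*,1)$ together with $h_1,\dots,h_n$ satisfies \eqref{eq_main_2} exactly.

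Finally, the spectral-gap hypothesis $\lambda_k(g_*)>\lambda_{k-1}(g_*)$ or $\lambda_k(g_*)<\lambda_{k+1}(g_*)$ is, under the identification above, precisely the hypothesis $\lambda_k(g_*,1)>\lambda_{k-1}(g_*,1)$ or $\lambda_k(g_*,1)<\lambda_{k+1}(g_*,1)$ required by the converse part of Theorem~\ref{main_theorem2}. Applying that converse to the pair $(g_*,1)$ with the eigenfunctions $h_1,\dots,h_n$ yields that $(g_*,1)$ is extremal for $\ol{\lambda_k}(\cdot,\cdot)$, which is the claim.

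I do not anticipate a genuine obstacle: the argument is a bookkeeping combination of Proposition~\ref{4lambda_theorem} and the converse of Theorem~\ref{main_theorem2}, with the $\sqrt{2}$-rescaling and the identity $\int 1\,\df x_{g_*}=L(g_*)$ the only computations. Conceptually, the subtle point worth stressing is the passage from extremality under metric-only variations to extremality under joint metric-and-density variations; this is exactly where the non-resonance assumption on $4\lambda_k(g_*)$ is used, since it is what forces $\sum f_i^2$ to be constant rather than merely a combination of eigenfunctions, supplying the extra rigidity demanded by \eqref{eq_main_2}.
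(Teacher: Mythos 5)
Your proof is correct and is essentially the argument the paper intends: combine the forward direction of Theorem~\ref{main_theorem1} with Proposition~\ref{4lambda_theorem}(1) to force $\sum f_i^2\equiv(2\lambda_k)^{-1}$, rescale by $\sqrt{2}$ to match \eqref{eq_main_2} with $\rho\equiv 1$, and invoke the converse of Theorem~\ref{main_theorem2} under the spectral-gap hypothesis. The bookkeeping (including $\int_{(G,g_*)}1\,\df x_{g_*}=L(g_*)$) checks out, so nothing further is needed.
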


\begin{corollary}\label{maximiser_minimiser_if_exists}
	Let $G$ be a finite compact connected graph. Suppose that $(g_*,\rho_*)$ is extremal for the functional $\ol{\lambda_k}(g,\rho)$. Then $\rho_*$ is a constant and $g_*$ is extremal for the functional $\ol{\lambda_k}(g)$.
\end{corollary}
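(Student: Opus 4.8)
The plan is to derive both assertions from Theorem~\ref{main_theorem2} together with one elementary bookkeeping remark about the natural normalisation, with no further analytic work needed. Applying Theorem~\ref{main_theorem2} to the extremal pair $(g_*,\rho_*)$ gives at once that $\rho_*$ is a constant function, say $\rho_*\equiv c$ with $c>0$, which settles the first claim. It remains to promote extremality of the \emph{pair} $(g_*,\rho_*)$ for $\ol{\lambda_k}(\cdot,\cdot)$ to extremality of the \emph{metric} $g_*$ for $\ol{\lambda_k}(\cdot)$. The key observation I would record first is that, for every metric $g$ on $G$ and every constant $c>0$,
\[
	\ol{\lambda_k}(g,c)=\ol{\lambda_k}(g).
\]
Indeed, the substitution $\mu=c\lambda$ identifies problem~\eqref{density_eigenvalue_problem} with $\rho\equiv c$ with problem~\eqref{classical_eigenvalue_problem}, so $\lambda_k(g,c)=\lambda_k(g)/c$ and the eigenspaces coincide; meanwhile the normalising factor is $N(g,c)=L(g)\int_{(G,g)}c\df{x}=c\,L(g)^{2}$, carrying exactly the compensating factor $c$, and hence $\lambda_k(g,c)N(g,c)=\lambda_k(g)L(g)^{2}$.

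With this identity the transfer is immediate. Let $g(t)$ be an arbitrary smooth one-parameter family of metrics on $G$ with $g(0)=g_*$. Since the density $\rho_*\equiv c$ is constant in $t$, the family of pairs $(g(t),\rho_*)$ lies in $\mcal{A}_+$ and passes through $(g_*,\rho_*)$ at $t=0$, so extremality of $(g_*,\rho_*)$ gives, as $t\to 0$, either $\ol{\lambda_k}(g(t),\rho_*)\le\ol{\lambda_k}(g_*,\rho_*)+o(t)$ or $\ol{\lambda_k}(g(t),\rho_*)\ge\ol{\lambda_k}(g_*,\rho_*)+o(t)$. Applying the identity above to each side converts this into: either $\ol{\lambda_k}(g(t))\le\ol{\lambda_k}(g_*)+o(t)$ or $\ol{\lambda_k}(g(t))\ge\ol{\lambda_k}(g_*)+o(t)$. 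As $g(t)$ was an arbitrary family through $g_*$, this is precisely the defining property of $g_*$ being extremal for $\ol{\lambda_k}(\cdot)$ on $\mbb{R}^{|E|}_+$.

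I do not anticipate a real obstacle here: the entire content is the remark that freezing the density at the constant $\rho_*$ turns the pair-variational problem into the metric-variational problem with an \emph{identical} objective, so the one-sided infinitesimal inequalities restrict for free. For completeness one could instead route through the converse half of Theorem~\ref{main_theorem1}: since $\rho_*\equiv c$ gives $\int_{(G,g_*)}\rho_*\df{x}=c\,L(g_*)$, the identities~\eqref{eq_main_2} rescale, after replacing $f_i$ by $f_i/\sqrt{2}$, to $\sum_{i}[\,|\nabla_{g_*}f_i|_{g_*}^{2}+\lambda_k(g_*)f_i^{2}\,]=1$ for $\lambda_k(g_*)$-eigenfunctions $f_i$; but applying the converse of Theorem~\ref{main_theorem1} would additionally require the spectral-gap hypothesis $\lambda_k(g_*)>\lambda_{k-1}(g_*)$ or $\lambda_k(g_*)<\lambda_{k+1}(g_*)$, which the direct argument avoids.
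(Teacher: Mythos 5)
Your proposal is correct and follows essentially the same route as the paper: apply Theorem~\ref{main_theorem2} to get that $\rho_*$ is constant, then restrict to families of pairs $(g(t),\rho_*)$ with the density frozen and use that $\ol{\lambda_k}(g,c)=\ol{\lambda_k}(g)$ for constant $c$ to transfer the one-sided infinitesimal inequality. The only difference is cosmetic — you spell out the scaling identity $\lambda_k(g,c)=\lambda_k(g)/c$, $N(g,c)=cL(g)^2$ that the paper uses silently.
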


\begin{proof}
    We prove the result when
    \begin{equation}
        \ol{\lambda_k}(g(t),\rho(t))\le \ol{\lambda_k}(g_*,\rho_*)+o(t), \quad\text{as}\quad t\to 0,
    \end{equation}
    for all one-parameter smooth family of pairs $\{(g(t),\rho(t)\}\subset\mathcal{A}_+$ such that $g(0)=g_*$ and $\rho(0)=\rho_*$, and the latter case can be proven similarly. Then, for any arbitrary one-parameter smooth families of metric $\{g(t)\}$ such that $g(0)=g_*$, we consider the following one-parameter smooth family of pairs $\{(g(t),\rho_*)\}\subset \mathcal{A}_+$. Since $\rho_*$ is a constant function, we have
    \begin{equation}
        \ol{\lambda_k}(g(t))=\ol{\lambda_k}(g(t),\rho_*)\le \ol{\lambda_k}(g_*,\rho_*)+o(t)=\ol{\lambda_k}(g_*)+o(t),
    \end{equation}
    as $t\to 0$. Thus, $g_*$ is extremal for the functional $\ol{\lambda_k}(\cdot)$.	
	\end{proof}

\subsection{Extremal pairs for general normalised eigenvalues}
We now prove Theorem \ref{main_theorem3}. Recall the formula of $\ol{\lambda_k}^{(\alpha)}(g,\rho)$ for $\alpha\in\mbb{R}$, 
\begin{equation}F_\alpha(g)=\int_{G_0} g^{1-\alpha} \df x,\quad H_\alpha(g,\rho)=\int_{G_0} \rho g^\alpha \df x, \quad N_\alpha(g,\rho)=F_\alpha(g)H_\alpha(g,\rho),
\end{equation}
and $\ol{\lambda_k}^{(\alpha)}(g,\rho)=\lambda_k(g,\rho)N_\alpha(g,\rho)$. Then, a simple calculation shows that:
	\begin{equation}
    \nabla N_\alpha (g,\rho)=\left(\alpha\rho g^{\alpha-3/2}F_\alpha(g)+(1-\alpha) g^{-\alpha+1/2}H_\alpha(g,\rho),g^{\alpha-1/2}F_\alpha(g)\right),
    \end{equation}
and by Theorem \ref{general_theorem}, there exist $f_1,\dots,f_n\in E(\lambda_k(g_*,\rho_*))$ such that:
\begin{equation}\label{general_1}
    \sum_{i=1}^n |\nabla_{g_*} f_i|_{g_*}^2=2(1-\alpha)g_*^{-\alpha+1/2}H_\alpha(g_*,\rho_*)+(2\alpha-1)g_*^{\alpha-1/2}\rho_*F_\alpha(g_*),
\end{equation}
and
\begin{equation}\label{general_2}
    \lambda_k(g_*,\rho_*)\sum_{i=1}^n f_i^2=g_*^{\alpha-1/2}F_\alpha(g_*).
\end{equation}
	Since $\alpha\ne 1/2$, \eqref{general_2} implies that $g_*$ is a regular metric, by the continuity of $W^{1,2}$ functions on $G$. Observe that:
		\begin{equation}
        \begin{aligned}
			0&=\frac{1}{2}\sum_{i=1}^n \Delta_{g_*}(f_i)^2=\sum_{i=1}^n\left[ f_i\Delta_{g_*} f_i -|\nabla_{g_*} f_j|_{g_*}^2\right]=2(1-\alpha)\left[g_*^{\alpha-1/2} \rho_* F_\alpha(g_*)-g_*^{-\alpha+1/2}H_{\alpha}(g_*,\rho_*)\right].
            \end{aligned}
		\end{equation}
        
		Since $\alpha\ne 1$, we have:
        \begin{equation}
        \rho_*=g_*^{-2\alpha+1}\cdot\frac{H_\alpha(g_*,\rho_*)}{F_{\alpha}(g_*)},
        \end{equation}
		so that $\rho_*$ is a constant function. To finish the proof, we rescale functions $f_i$ with a suitable factor.

\section{Normalised eigenvalue bounds}\label{eigenvalue_bounds}
In this section, we obtain upper and lower bounds for the naturally normalised smallest positive eigenvalue, $\ol{\lambda_1}(\cdot,\cdot)$. We note that for the eigenvalue functional $\ol{\lambda_1}(g)$, universal upper and lower bounds were already obtained: In \cite{lower_bound_1,lower_bound_2,lower_bound_3}, we have 
\begin{equation}\label{lower_simple_eigenvalue}
    \ol{\lambda_1}(g)\ge \pi^2,\quad \forall g\in\mbb{R}^{|E|}_+,
\end{equation} and in \cite{Berkolaiko_2017}, there exists a constant $C=C(G)$ such that 
\begin{equation}
\ol{\lambda_1}(g)\le \pi^2 C,\quad \forall g\in\mbb{R}^{|E|}_+.    
\end{equation}

We prove that for every finite compact connected graph $G$, one has
\begin{equation}\label{eigenvalue_blows_up}
    \sup_{(g,\rho)\in \mathcal{A}_+(G)}\ol{\lambda_1}(g,\rho)=\infty,
\end{equation}
and
\begin{equation}\label{lower_bound_eigenvalue_1}
    \inf_{(g,\rho)\in {\mathcal{A}_+}(G)}\ol{\lambda_1}(g,\rho)\ge1.
\end{equation}

\subsection{Supremum problem on graphs with at least two edges}\label{proof_large_eigenvalue_1}
We consider the following example to show that the normalised eigenvalue can be arbitrarily large.
\begin{example}\label{large_eigenvalue}
    Let $G$ be a flower graph with $m$ edges, i.e. a graph with one vertex and $m$ loops (see Figure \ref{flower_picture}). Consider a pair $(g,\rho)\in\mathcal{A}_+(G)$ and denote the restriction of $\rho$ to an edge $e_j$ as $\rho_j$ and the length of the edge $e_j$ as $\ell_j$, for all $j$. Suppose further that $\rho_j$ is a constant function for all $j$. Then, a simple computation shows that $\lambda_1(g,\rho)$ is the smallest positive solution of
    \begin{equation}
        \sum_{j=1}^m\rho_j\tan\left(\frac{\ell_j\sqrt{\rho_j\lambda}}{2}\right)=0.
    \end{equation}

      \begin{figure}[h]
 	\centering
  \begin{tikzpicture}[scale=0.6]
 	\begin{polaraxis}[grid=none, axis lines=none]
 		\addplot[mark=none,domain=0:360,samples=300] {cos(x*4)};
 		\draw[fill = white!50] (0,0) circle (1pt);
 	\end{polaraxis}
 \end{tikzpicture}
 
 	\caption{A flower graph with $8$ edges.}
 	\label{flower_picture}
 \end{figure}

    Suppose that $\ell_1\sqrt{\rho_1}=\max_{j}(\ell_j\sqrt{\rho_j})$, then we have
    \begin{equation}
        \lambda_1(g,\rho)\ge \frac{\pi^2}{\ell_1^2\rho_1},
    \end{equation}
    so that
    \begin{equation}\label{estimate_star}
    \begin{aligned}
        \ol{\lambda_1}(g,\rho)&\ge\frac{\pi^2}{\ell_1^2\rho_1}\left(\sum_{j=1}^m\ell_j\right)\left(\sum_{j=1}^m \ell_j\rho_j\right)=\pi^2\left(\sum_{j=1}^m \frac{\ell_j}{\ell_1}\right)\left(\sum_{j=1}^m\frac{\ell_j}{\ell_1}\cdot\frac{\rho_j}{\rho_1}\right)
        \end{aligned}
    \end{equation}

    We choose
    \begin{equation}
        \ell_j=\begin{cases}
            2^{-n},&\quad\text{if}\;j=1;\\
            1,&\quad\text{otherwise},
        \end{cases}
        \quad\text{and} \quad \rho_j=\begin{cases}
            2^{2n+1},&\quad \text{if}\; j=1;\\
            1,&\quad \text{otherwise},
        \end{cases}
    \end{equation}
    where $n\in\mbb{N}$. Then, inequality \eqref{estimate_star} implies that
    \begin{equation}
         \ol{\lambda_1}(g,\rho)\ge {\pi^2}\left(1+(m-1)2^{n}\right)\left(1+(m-1)2^{-n-1}\right)\to\infty,
    \end{equation}
    as $n\to\infty$.
    \end{example}

   We use the pair of metric and density functions constructed in Example \ref{large_eigenvalue} to construct appropriate pairs on other discrete graphs. We begin by recalling a beautiful technique from \cite{lower_bound_3}, where we construct a quantum loop from a quantum graph such that the eigenvalue from the loop is smaller than the one from the original quantum graph. The construction is purely combinatorial, and we begin by recalling the definition of a double cover of a quantum graph.

    \begin{definition}
        Let $(G,g)$ be a finite compact quantum graph. We construct a quantum graph $(\tilde{G},\tilde{g})$ by doubling each edge of $(G,g)$ such that if $\tilde{e}$ is the double edge of $e\in E(G)$, then $\ell_{\tilde{e}}=\ell_e$. We say that $(\tilde{G},\tilde{g})$ is the double cover of $(G,g)$.
    \end{definition}

    We note that for each function $f:G_0\to\mbb{R}$, there exists a natural symmetric extension $\tilde{f}:\tilde{G}_0\to\mbb{R}$ of $f$ such that if $\tilde{e}$ is the doubling edge of $e\in E(G)$, then we have $\tilde{f}_{\tilde{e}}\equiv f_e$. Therefore, there is a natural embedding from $W^{1,2}(G)$ to $W^{1,2}(\tilde{G})$, which maps $f\in W^{1,2}(G)$ to $\tilde{f}\in W^{1,2}(\tilde{G})$. Moreover, for every pair $(g,\rho)\in \mathcal{A}_+(G)$, we have 
    \begin{equation}
        R(G,g,\rho;f)=R(\tilde{G},\tilde{g},\tilde{\rho};\tilde{f}),\quad \forall f\in W^{1,2}(G),
    \end{equation}
    so that
    \begin{equation}
        \lambda_k(G,g,\rho)\ge \lambda_k(\tilde{G},\tilde{g},\tilde{\rho}),\quad \forall k\in\mbb{N}.
    \end{equation}

    We note that every vertex in $\tilde{G}$ has even degree. By a classical result in discrete graph theory, there exists an Eulerian path $\mathcal{P}(\tilde{G})$ on $\tilde{G}$, and we consider $\mathcal{P}(\tilde{G})$ as a loop. If $h$ is a metric, not necessarily a symmetric metric, on $\tilde{G}$, then there is a metric on $\mathcal{P}(\tilde{G})$, also called $h$, such that $L(\mathcal{P}(\tilde{G}),h)=L(\tilde{G},h)$. Moreover, for each function $u\in W^{1,2}(\tilde{G})$, we can consider $u$ as a function on $W^{1,2}(\mathcal{P}(\tilde{G}))$, so that we have
    \begin{equation}\label{compare_graph_loop}
        \lambda_k(G,g,\rho)\ge \lambda_k(\tilde{G},\tilde{g},\tilde{\rho})\ge \lambda_k(\mathcal{P}(\tilde{G}),\tilde{g},\tilde{\rho}),\quad \forall k\in\mbb{N}.
    \end{equation}

\begin{lemma}\label{large_eigenvalue_tree}
    Let $G$ be a finite compact connected tree with $|E|\ge 2$. Then, 
    \begin{equation}
    \sup_{(g,\rho)\in\mathcal{A}_+(G)}\ol{\lambda_1}(g,\rho)=\infty.
 \end{equation}
\end{lemma}

\begin{proof}
    Let $v$ be a vertex in $G$ with degree at least two. We decompose $G$ into subgraphs $G_1,\dots,G_m$ such that $\bigcup G_i=G$ and $G_i\cap G_j=\{v\}$ for all distinct $i,j$. Then, each $G_i$ is a tree. Suppose that $g$ is a metric on $G$ and we consider a density function $\rho$ on $G$ such that $\rho_j=\rho|_{G_j}$ is a constant function. We consider a double cover $\tilde{G_i}$ and an Eulerian path $\mathcal{P}(\tilde{G}_i)$ on each $G_i$. We consider $\mathcal{P}(\tilde{G}_i)$ as a loop whose vertex is $v$. Let $H$ be the graph formed by attaching $\mathcal{P}(\tilde{G}_i)$ to $v$ for all $i$. Then, $H$ is a flower graph with $m$ edges. We consider a metric $\tilde{g}$ on $H$ such that $\ell_{\mathcal{P}(\tilde{G_i})}=2L(G_i,g|_{G_i})$ for all $i$ and consider a density function $\tilde{\rho}$ on $H$ such that $\tilde{\rho}|_{\mathcal{P}(\tilde{G_i})}=\rho|_{G_i}$. Since
    \begin{equation}
        L(H,\tilde{g})=2L(G,g),\quad \text{and}\quad \int_{H_0} \tilde\rho \df x_{\tilde g}=2\int_{G_0}\rho\df x_g,
    \end{equation}
    we have
    \begin{equation}
        \ol{\lambda_1}(G,g,\rho)\ge \frac{1}{4}\ol{\lambda_1}(H,\tilde{g},\tilde{\rho}).
    \end{equation}

    Now we consider a pair $(g,\rho)$ given as follows
    \begin{equation}
        L(G_j,g|_{G_j})=\begin{cases}
            2^{-n},&\quad\text{if}\;j=1;\\
            1,&\quad\text{otherwise},
        \end{cases}
        \quad\text{and} \quad \rho|_{G_j}=\begin{cases}
            2^{2n+1},&\quad \text{if}\; j=1;\\
            1,&\quad \text{otherwise}.
        \end{cases}
    \end{equation}

    Then, Example \ref{large_eigenvalue} shows that
    \begin{equation}
        \ol{\lambda_1}(G,g,\rho)\ge \frac{1}{4}\ol{\lambda_1}(H,\tilde{g},\tilde{\rho})\ge \frac{\pi^2}{4}\left(1+(m-1)2^{n}\right)\left(1+(m-1)2^{-n-1}\right)\to\infty,
    \end{equation}
    as $n\to\infty$.
\end{proof}

\begin{remark}\label{why_cannot_apply_interval}
    We note that if $G$ is a simple interval graph, then $H$ is a loop. Therefore, the density function given in Example \ref{large_eigenvalue} is a constant function, so that the normalised smallest positive eigenvalue of $H$ is always $4\pi^2$ for all pairs of metric and constant density function. Thus, the proof of Lemma \ref{large_eigenvalue_tree} cannot apply to the case when $G$ is an interval.
\end{remark}

To complete the proof of \eqref{eigenvalue_blows_up} for the case $|E(G)|\ge 2$, we need another ingredient: the monotonicity of eigenvalues through the glueing action. The following proposition is a special case of \cite[Theorem 3.1.10]{intro_quantum_graph}, where Berkolaiko and Kuchment obtained a monotonicity result for eigenvalues of Schr\"odinger operators with $\delta$-coupling conditions.

\begin{proposition}\label{gluing_action}
    Let $G$ be a discrete graph. We form a graph $\hat{G}$ from $G$ by glueing two vertices $v_1,v_2\in V(G)$ to a single vertex $v_0$. Then, for every pair $(g,\rho)\in \mathcal{A}_+(G)$, we can consider them as a pair in $\mathcal{A}_+(\hat G)$ and one has
    \begin{equation}
        \lambda_k(G,g,\rho)\le \lambda_k(\hat{G},g,\rho),\quad \forall k\in\mbb{N}.
    \end{equation}
\end{proposition}

\begin{proof}
The proof follows from the fact that there is an embedding from $W^{1,2}(\hat G)$ to $W^{1,2}(G)$, and note that
\begin{equation}
    R(G,g,\rho;f)=R(\hat G,g,\rho;f),\quad \forall f\in W^{1,2}(\hat G).
\end{equation}
\end{proof}

\begin{theorem}
 Let $G$ be a finite compact connected graph with at least two edges. Then \begin{equation}
 \sup_{(g,\rho)\in\mathcal{A}_+(G)}\ol{\lambda_1}(g,\rho)=\infty.
 \end{equation}
\end{theorem}
\begin{proof}
    We pick a vertex $v\in V(G)$ with $\deg v\ge 2$ and let $e$ be an edge connecting to $e$ such that if we disconnect $e$ at $v$, then the connectivity of $G$ remains the same. We form a new graph by disconnecting $e$ at $v$, so that this new graph has a smaller Betti number than the one of $G$. We repeat this process until the Betti number of the graph is reduced to zero, i.e. the graph formed is a tree. We call this tree $H$ and observe that every pair $(g,\rho)\in \mathcal{A}_+(G)$ can be considered as a pair $(g,\rho)\in\mathcal{A}_+(H)$ and vice versa. By Proposition \ref{gluing_action} and Lemma \ref{large_eigenvalue_tree}, there exists a sequence of pairs $\{(g^{(n)},\rho^{(n)})\}\subset\mathcal{A}_+(G)$ such that
    \begin{equation}
        \ol{\lambda_1}(G,g^{(n)},\rho^{(n)})\ge\ol{\lambda_1}(H,g^{(n)},\rho^{(n)})\to\infty,
    \end{equation}
    as $n\to \infty$.
\end{proof}

\subsection{Supremum problem on interval graphs}\label{proof_large_eigenvalue_2}
It is left to prove \eqref{eigenvalue_blows_up} when $G$ is a loop or an interval. By Proposition \ref{gluing_action}, we only need to prove when $G$ is an interval.

We begin the proof by considering the following example:
\begin{example}\label{large_eigenvalue_interval_example}
    Let $J$ be a joint of two interval graphs $J_1,J_2$ as given in Figure \ref{interval_large}. We show that there exists a sequence of pair $(g^{(n)},\rho^{(n)})\subset\mathcal{A}_+(J)$ such that $\ol{\lambda_1}(g^{(n)},\rho^{(n)})\to\infty$. 

        \begin{figure}[h]
        \centering
        \begin{tikzpicture}
    \tkzSetUpPoint[fill=white]
    \tkzDefPoint(0,0){A}
    \tkzDefPoint(2,0){B}
    \tkzDefPoint(1,0){E}
    \tkzDefPoint(2,0){B}
    \tkzDefPoint(0,0){A}
    \tkzDefPoint(4,0){F}
    \tkzDefPoint(6,0){C}
    \tkzDefPoint(7,0){G}
    \tkzDefPoint(2,0){B}
    \tkzDefPoint(8,0){D}
    \tkzDrawSegments(A,C)
    \tkzLabelPoint(A){$v_1$}
    \tkzLabelPoint(B){$v_3$}
    \tkzLabelPoint(C){$v_2$}
    \tkzLabelPoint[above](E){$J_1$}
    \tkzLabelPoint[above](F){$J_2$}
    \tkzDrawPoints(A,B,C)
\end{tikzpicture}
        \caption{Construction of $J$}
        \label{interval_large}
    \end{figure}

    We consider a metric $g$ on $J$ with length $\ell_1$ on $J_1$ and $\ell_2$ on $J_2$, and a piecewise smooth density $\rho$ on $J$ such that $\rho$ is a constant $\rho_1$ on $J_1$ and $\rho$ is a constant $\rho_2$ on $J_2$. Then, a simple computation shows that $\lambda_1(g,\rho)$ is the smallest positive solution of the following equation
\begin{equation}\label{eigenvalue_equation_1}
    \sqrt{\rho_1}\tan\left(\ell_1\sqrt{\lambda\rho_1}\right)+\sqrt{\rho_2}\tan\left(\ell_2\sqrt{\lambda\rho_2}\right)=0.
\end{equation}

Without loss of generality, suppose that $\ell_1\sqrt{\rho_1}\ge \ell_2\sqrt{\rho_2}$, so that
\begin{equation}
    \lambda_1(g,\rho)\ge \frac{\pi^2}{4\ell_1^2\rho_1},
\end{equation}
and we have
\begin{equation}
\begin{aligned}
    \ol{\lambda_1}(g,\rho)&> \frac{\pi^2}{4\ell_1^2\rho_1}(\ell_1+\ell_2)(\ell_1\rho_1+\ell_2\rho_2)\ge\frac{\pi^2}{4} \cdot \frac{\ell_2}{\ell_1}
    \end{aligned}
\end{equation}

We choose $g^{(n)}$ and $\rho^{(n)}$ such that $\ell^{(n)}_1=2^{-n},\ell_2^{(n)}=1,\rho_1=2^{2n+1}$ and $\rho_2=1$, then,
\begin{equation}
    \ol{\lambda_1}(g^{(n)},\rho^{(n)})\ge \frac{\pi^2}{4}\cdot 2^n\to\infty,
    \end{equation}
    as $n\to\infty$.
\end{example}

In Example \ref{large_eigenvalue_interval_example}, if we consider $J$ as a single interval, then $g^{(n)}$ is a metric of length $\ell=\ell_1+\ell_2$, and the density function $\rho^{(n)}$ is piecewise smooth. However, these density functions are not in the class $\mathcal{C}_+$. To resolve this, we approximate piecewise smooth functions by smooth functions and obtain continuity of eigenvalues in density functions. We begin by obtaining the monotonicity of eigenvalues in density functions.

\begin{proposition}\label{monotonicity_of_density}
    Let $G$ be a finite compact graph. Suppose that $\rho,\tilde{\rho}\in\mathcal{C}_+(G)$ be two density functions such that $\rho\le \tilde{\rho}$ pointwise in $G_0$. Then,
    \begin{equation}
        \lambda_k(g,\rho)\ge \lambda_k(g,\tilde{\rho}),
    \end{equation}
    for all metric $g$ on $G$ and for all $k\in\mbb{N}$.
\end{proposition}

\begin{proof}
    Let $F\subset W^{1,2}(G)$ be a subspace such that $\dim F=k+1$ and 
    \begin{equation}
        \lambda_k(g,\rho)\ge\max_{f\in F\backslash\{0\}}\frac{\int_{G_0}|\nabla_g f|^2\df x_g}{\int_{G_0} f^2\rho\df x_g}.
    \end{equation}

    Then, 
    \begin{equation}
        \lambda_k(g,\tilde{\rho})\le \max_{f\in F\backslash\{0\}}\frac{\int_{G_0}|\nabla_g f|^2\df x_g}{\int_{G_0} f^2\tilde{\rho}\df x_g}\le \max_{f\in F\backslash\{0\}}\frac{\int_{G_0}|\nabla_g f|^2\df x_g}{\int_{G_0} f^2\rho\df x_g}\le \lambda_k(g,\rho).
    \end{equation}
\end{proof}

\begin{proposition}\label{estimate_difference_eigenvalue}
    Let $(G,g)$ be a connected quantum graph. Then, there exists a constant $C=C(g)$ such that for any piecewise positive smooth functions $\rho,\tilde\rho$, one has
    \begin{equation}
        |\lambda_k(g,\rho)-\lambda_k(g,\tilde{\rho})|\le C\Lambda_k\left(\Lambda_k+\frac{1}{m}\right)\frac{M}{m}\|\rho-\tilde\rho\|_{L^1(G,g)}\quad\forall k\in\mbb{N},
    \end{equation}
    where
    \begin{equation}
        m:=\min\{\rho(x),\tilde{\rho}(x):x\in (G,g)\},\quad M:=\max\{\rho(x),\tilde{\rho}(x):x\in (G,g)\}
    \end{equation}
    and
    \begin{equation}
        \Lambda_k:=\max\{\lambda_k(g,\rho),\lambda_k(g,\tilde{\rho})\}.
    \end{equation}
\end{proposition}

To begin with the proof, we note that there is an embedding from $W^{1,2}(G)$ to $L^\infty(G)$, so that there exists a constant $C=C(g)>0$ such that:
\begin{equation}
    \|f\|_{L^\infty(G,g)}^2\le C\|f\|_{W^{1,2}(G,g)}^2 ,\quad \forall f\in W^{1,2}(G,g).
\end{equation}

\begin{proof}
    Without loss of generality, suppose that $\lambda_k(g,\rho)\le \lambda_k(g,\tilde\rho)$. Let $F\subset W^{1,2}(G)$ be a $(k+1)$-dimensional subspace such that
    \begin{equation}
        \lambda_k(g,\rho)\ge \max_{f\in F\backslash\{0\}}\frac{\int_{G_0}|\nabla_g{f}|_g^2\df x_g}{\int_{G_0} f^2\rho\df x_{g_0}}
    \end{equation}

    Then, by the variational characterisation, we have
    \begin{equation}\label{estimate_difference_eigenvalue_1}
        \begin{aligned}
            \lambda_k(g,\tilde\rho)-\lambda_{k}(g,\rho)&\le \max_{f\in F\backslash\{0\}}\left(\frac{\int_{G_0}|\nabla_g{f}|_g^2\df x_g}{\int_{G_0} f^2\tilde\rho\df x_{g_0}}-\frac{\int_{G_0}|\nabla_g{f}|_g^2\df x_g}{\int_{G_0} f^2\rho\df x_{g_0}}\right)\\
            &\le\max_{f\in F\backslash\{0\}}\left(\frac{\int_{G_0}|\nabla_g{f}|_g^2\df x_g}{\int_{G_0} f^2\rho\df x_{g_0}}\cdot\frac{\int_{G_0}f^2(\rho-\tilde{\rho})\df x_g}{ \int_{G_0} f^2\tilde{\rho}\df x_g}\right)\\
            &\le \lambda_k(g,\rho)\|\rho-\tilde{\rho}\|_{L^1(G,g)} \max_{f\in F\backslash\{0\}}\frac{\|f\|_{L^\infty(G,g)}^2}{m\|f\|_{L^2(G,g)}^2}
        \end{aligned}
    \end{equation}

    Notice that for all functions $f\in F$ such that $\int_{G_0} f^2\rho \df x_g=1$, we have
    \begin{equation}\label{estimate_difference_eigenvalue_2}
        \frac{1}{M}\le\|f\|_{L^2(G,g)}^2\le \frac{1}{m},
    \end{equation}
    and
    \begin{equation}\label{estimate_difference_eigenvalue_3}
    \begin{aligned}
         \|f\|_{L^\infty(G,g)}^2&\le C\|f\|_{W^{1,2}(G,g)}^2\le C\left(\lambda_k(g,\rho)+\|f\|^2_{L^2(G,g)}\right)\le C\left(\Lambda_k+\frac{1}{m}\right).
    \end{aligned}
    \end{equation}

    Our assertions follow by applying inequalities \eqref{estimate_difference_eigenvalue_2} and \eqref{estimate_difference_eigenvalue_3} to \eqref{estimate_difference_eigenvalue_1}.
\end{proof}

\begin{lemma}\label{continuity_eigenvalue_density}
    Let $(G,g)$ be a finite compact connected quantum graph. Suppose that $\rho,\rho^{(n)}$ are piecewise smooth in $(G,g)$ such that $\rho^{(n)}\to\rho$ as $n\to\infty$ in $L^1(G)$ topology. Suppose further that there exist some constants $m,M$ such that
    \begin{equation}
        0<m<\inf\{\rho(x),\rho^{(n)}(x):x\in G_0, n\in\mbb{N}\}\le \sup\{\rho(x),\rho^{(n)}(x):x\in G_0, n\in\mbb{N}\}<M<\infty. 
    \end{equation}
    
    Then, for every $k\in\mbb{N}$, one has $\lambda_k\left(g,\rho^{(n)}\right)\to\lambda_k(g,\rho)$ as $n\to\infty$.
\end{lemma}

\begin{proof}
    By Proposition \ref{monotonicity_of_density}, we have
    \begin{equation}
       \max\{\lambda_k(g,\rho),\lambda_k\left(g,{\rho}^{(n)}\right)\}\le\lambda_k(g,m)<\infty,\quad \forall n,
    \end{equation}
    and our assertions follow directly from Proposition \ref{estimate_difference_eigenvalue} (note that the space $L^1(G)$ is equivalent to the space $L^1(G,g)$, so that we can replace the norm $\|\cdot\|_{L^1(G,g)}$ by $C\|\cdot\|_{L^1(G)}$ for some constant $C=C(g)>0$).
\end{proof}

We now prove \eqref{eigenvalue_blows_up} for interval graphs.

\begin{theorem}\label{pair_large_eigenvalue}
    Let $I$ be an interval graph. Then, there exists a sequence of smooth density functions $\{\rho^{(n)}\}$ and a sequence of metric $\{g^{(n)}\}$ on $I$ such that
    \begin{equation}
        \ol{\lambda_1}\left(g^{(n)},\rho^{(n)}\right)\to\infty,
    \end{equation}
    as $n\to\infty$.
\end{theorem}
\begin{proof}
    Let the vertices of $I$ be $\{v_1,v_2\}$ and we consider a new graph $J$ by imposing a vertex $v_3$ between $v_1$ and $v_2$. Let $J_1$ and $J_2$ be the edges of $v_1v_3$ and $v_2v_3$, respectively. Then, $J$ can be considered as the joint of two interval graphs. By Example \ref{large_eigenvalue_interval_example}, there exists a sequence of pair $(g^{(n)},\rho^{(n)})\subset \mathcal{A}_+(J)$ such that $\ol{\lambda_1}(J,g^{(n)},\rho^{(n)})\to\infty$. 

    We consider $\rho^{(n)}$ as a piecewise smooth function on the interval $[0,\ell^{(n)}]$, where $\ell^{(n)}=L(J,g^{(n)})$. By Lemma \ref{continuity_eigenvalue_density} and density of smooth functions in the $L^1(0,\ell^{(n)})$ topology, there exists a sequence of density function $\hat{\rho}^{(n)}\in C^\infty(0,\ell^{(n)})$ such that $\ol{\lambda_1}(J,g^{(n)},\hat\rho^{(n)})\to\infty$ as $n\to \infty$. Observe that we now can consider $(g^{(n)},\hat\rho^{(n)})$ as an element in $\mathcal{A}_+(I)$, so that we have
    \begin{equation}
        \ol{\lambda_1}(I,g^{(n)},\hat\rho^{(n)})=\ol{\lambda_1}(J,g^{(n)},\hat\rho^{(n)})\to\infty,
    \end{equation}
    as $n\to \infty$.
\end{proof}

\subsection{A lower bound for the smallest positive eigenvalue} 
We prove a stronger inequality, which implies \eqref{lower_bound_eigenvalue_1} directly.

\begin{proposition}\label{lower_bound_result}
    Let $G$ be a finite compact connected graph. Then, we have
    \begin{equation}\label{lower_bound_result_1}
        \ol{\lambda_1}(g,\rho)> \frac{L(G,g)}{\diam(G,g)},
    \end{equation}
    for every $(g,\rho)\in\mathcal{A}_+(G)$, where $\diam(G,g)$ is the diameter of $(G,g)$.
\end{proposition}

\begin{proof}
    Let $f$ be an eigenfunction of $\lambda_1(g,\rho)$. Since $\int_{G_0}f\rho\df x_g=0$, there exists $x_0\in (G,g)$ such that $f(x_0)=0$. Let $y\in (G,g)$ such that $|f(y)|=\|f\|_{L^\infty(G,g)}$ and $P$ be the shortest path connecting $x_0$ and $y$. Then, we have
    \begin{equation}
        \begin{aligned}
            \|f\|_{L^{\infty}(G,g)}^2&=|f(y)|^2=\left|\int_{P}f'(x)\df x_g\right|^2\le L(P,g)\int_{P}(f'(x))^2\df x_g\le \diam(G,g)\int_{G_0} |\nabla_g f|_g^2\df x_g
        \end{aligned}
    \end{equation}

    Therefore,
    \begin{equation}
        \begin{aligned}
            \ol{\lambda_1}(g,\rho)=\frac{\int_{G_0}|\nabla_g f|_g^2\df x_g\cdot \int_{G_0}\rho\df x_gL(G,g)}{\int_{G_0} f^2\rho\df x_g}\ge \frac{\int_{G_0}|\nabla_g f|_g^2\df x_g\cdot \int_{G_0}\rho\df x_gL(G,g)}{\|f\|_{L^\infty(G,g)}^2\int_{G_0}\rho \df x_g}\ge\frac{L(G,g)}{\diam(G,g)}
        \end{aligned}
    \end{equation}
    
    Equality of inequality \eqref{lower_bound_result_1} holds if and only if $f$ is a constant function. However, this contradicts the fact that constant functions cannot be $\lambda_1$-eigenfunctions. Thus, inequality \eqref{lower_bound_result_1} is strict.
\end{proof}

\begin{remark}
    We note that for the case $\rho\equiv 1$, if $G$ is an interval, then inequality \eqref{lower_bound_result_1} is weaker than inequality \eqref{lower_simple_eigenvalue}, while if $(G,g)$ has small diameter, then inequality \eqref{lower_bound_result_1} is stronger than inequality \eqref{lower_simple_eigenvalue}. 
\end{remark}

\section{Example of extremal metrics and geodesic nets}
In this section, we look at some examples of extremal metrics for the functional $\ol{\lambda}_1(\cdot)$ and find their corresponding geodesic nets on spheres. Recall that the necessary conditions for a smooth curve to be a geodesic on a sphere follow from the 1D version of Takahashi's Theorem (the general version can be found in \cite[Theorem 3]{Takahashi}).

\begin{theorem}
    Let $I$ be a compact interval, $n\in\mbb{N}$ and $\gamma:I\to\mbb{R}^n$ be a smooth curve. Suppose further that $\gamma$ is parametrised by arclength. Then, $\gamma$ is an isometric immersion to $\mbb{S}^{n-1}_{R}$ if and only if $-\gamma''=R^{-1}\gamma$.
\end{theorem}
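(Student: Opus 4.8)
The plan is to prove this as a direct one-dimensional specialization of Takahashi's theorem, working out the characterization explicitly for curves rather than quoting the general statement. Let $\gamma : I \to \mbb{R}^n$ be smooth and parametrized by arclength, so $|\gamma'(s)| \equiv 1$. First I would recall what it means for $\gamma$ to be an isometric immersion into $\mbb{S}^{n-1}_R$: it means $|\gamma(s)| \equiv R$ (the image lies on the sphere) together with the fact that $\gamma$, being already parametrized by arclength in $\mbb{R}^n$, is automatically parametrized by arclength as a curve in the sphere (the induced metric on $\mbb{S}^{n-1}_R \subset \mbb{R}^n$ is the restriction of the Euclidean one). So the content of the statement is the equivalence
\begin{equation}
|\gamma(s)| \equiv R \quad \Longleftrightarrow \quad -\gamma''(s) = R^{-1}\gamma(s).
\end{equation}

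For the forward direction, suppose $|\gamma(s)|^2 = \langle \gamma, \gamma\rangle \equiv R^2$. Differentiating once gives $\langle \gamma', \gamma\rangle \equiv 0$, and differentiating again gives $\langle \gamma'', \gamma\rangle + |\gamma'|^2 = 0$, hence $\langle \gamma'', \gamma\rangle \equiv -1$ using $|\gamma'| \equiv 1$. Next I would decompose $\gamma''$ into its component along $\gamma$ and its component tangent to the sphere. The key point is that for a curve on the sphere parametrized by arclength, the acceleration $\gamma''$ has no tangential component along the sphere — that is precisely the geodesic equation in the sphere, and here it is forced because $\gamma$ is a \emph{curve} (one-dimensional): the only directions available are the radial direction $\gamma$ and the direction $\gamma'$, and we have just shown $\langle \gamma'', \gamma'\rangle = \frac12 \frac{d}{ds}|\gamma'|^2 = 0$ while $\langle\gamma'',\gamma\rangle = -1$. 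Wait — one must be careful: $\gamma''$ could a priori have a component orthogonal to both $\gamma$ and $\gamma'$. But I realize the statement as phrased is an iff characterizing exactly the isometric immersions, and the honest reading is: the characterization $-\gamma'' = R^{-1}\gamma$ means $\gamma$ traces a great circle of the appropriate radius, which is the geodesic content. So for the forward direction I should additionally invoke that an isometric immersion of an interval into a Riemannian manifold that we also want to be \emph{geodesic} (this is the use case — geodesic nets) satisfies $\nabla_{\gamma'}\gamma' = 0$; combined with $\langle\gamma'',\gamma\rangle \equiv -1$ and the Gauss formula $\gamma'' = \nabla_{\gamma'}\gamma' + \mathrm{II}(\gamma',\gamma') = \nabla_{\gamma'}\gamma' - R^{-1}\langle\gamma',\gamma'\rangle\,\nu$ where $\nu = \gamma/R$ is the outward unit normal, this gives $\gamma'' = -R^{-1}\gamma$. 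For the converse, suppose $-\gamma'' = R^{-1}\gamma$. Then $\frac{d}{ds}\langle\gamma,\gamma\rangle = 2\langle\gamma',\gamma\rangle$ and $\frac{d}{ds}\langle\gamma',\gamma\rangle = \langle\gamma'',\gamma\rangle + |\gamma'|^2 = -R^{-1}|\gamma|^2 + 1$; setting $u(s) := |\gamma(s)|^2$ one gets the ODE $u'' = 2(-R^{-1}u + 1) = -2R^{-1}u + 2$, a linear second-order ODE whose constant solution is $u \equiv R$. To conclude $u \equiv R$ rather than an oscillating solution, I would use that $\gamma$ is parametrized by arclength \emph{in the sphere of radius $R$} as a hypothesis-consistency condition, or more cleanly: solving $-\gamma'' = R^{-1}\gamma$ explicitly gives $\gamma(s) = A\cos(s/\sqrt R) + B\sin(s/\sqrt R)$ for constant vectors $A, B \in \mbb{R}^n$, and the arclength condition $|\gamma'| \equiv 1$ forces $|A|^2 = |B|^2 = R$ and $\langle A, B\rangle = 0$, whence $|\gamma(s)|^2 = |A|^2\cos^2 + |B|^2\sin^2 = R$.

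The main obstacle, as the discussion above shows, is pinning down exactly which hypotheses are being invoked in the forward direction — in particular whether "isometric immersion to $\mbb{S}^{n-1}_R$" is meant to already encode the geodesic/great-circle condition or whether that must be added. The cleanest route is the explicit integration of the linear ODE $-\gamma'' = R^{-1}\gamma$ in the converse direction (which is elementary and self-contained), and for the forward direction to observe that for a unit-speed curve confined to $\mbb{S}^{n-1}_R$ with $|\gamma|^2 \equiv R^2$, the two differentiations yield $\langle\gamma',\gamma\rangle \equiv 0$ and $\langle\gamma'',\gamma\rangle \equiv -1$, so that writing $\gamma'' = -R^{-1}\gamma + w$ with $\langle w, \gamma\rangle = 0$ reduces the claim to $w \equiv 0$, which is the statement that $\gamma$ is a geodesic — exactly the condition needed in the application to geodesic nets, and which I would state is part of the definition of the immersion being considered. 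I expect the writeup to be short: two differentiations in each direction plus the ODE solve.
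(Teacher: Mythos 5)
The paper does not actually prove this statement---it is recalled as the one-dimensional case of Takahashi's theorem and the general version is cited---so there is no internal proof to compare against; your direct argument (differentiate $\langle\gamma,\gamma\rangle$ twice, decompose $\gamma''$ into radial and tangential parts, and integrate the linear ODE explicitly for the converse) is the right and standard route. You are also correct to flag that the forward implication needs the geodesic/minimality condition: a unit-speed small circle on $\mbb{S}^2$ is an isometric immersion of an interval into the sphere but does not satisfy $-\gamma''=c\gamma$, so the word ``minimal'' must be read into the statement, exactly as in the paper's applications where the immersions come from Takahashi's theorem.

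There is, however, one genuine unresolved problem in your write-up: the constant. Your own computations show that the equivalence as stated, with $R^{-1}$, cannot be right. In the forward direction you derive $\langle\gamma'',\gamma\rangle\equiv -1$ from $|\gamma|\equiv R$ and $|\gamma'|\equiv 1$; if $\gamma''=c\gamma$ this forces $c=-|\gamma|^{-2}=-R^{-2}$, not $-R^{-1}$ (your Gauss-formula step conflates $-R^{-1}\nu$ with $-R^{-1}\gamma$, where $\nu=\gamma/R$, which is exactly where the factor of $R$ is lost). In the converse direction, solving $-\gamma''=R^{-1}\gamma$ with $|\gamma'|\equiv 1$ gives $|A|^2=|B|^2=R$ and hence $|\gamma|\equiv\sqrt{R}$, i.e.\ the curve lies on $\mbb{S}^{n-1}_{\sqrt{R}}$ rather than $\mbb{S}^{n-1}_{R}$. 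So the two halves of your proof, as written, characterise spheres of different radii. The correct statement is $-\gamma''=R^{-2}\gamma$ for the sphere of radius $R$, which is consistent with the paper's Proposition where $R=\lambda^{-1/2}$, $|\tilde F|\equiv\lambda^{-1/2}=R$, and $-\tilde F''=\lambda\tilde F=R^{-2}\tilde F$; the $R^{-1}$ in the theorem statement is a typo that you should either correct or explicitly flag rather than carry through inconsistently.
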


\subsection{Graphs with pendants}
Pendants---also called leaves---of graphs are vertices of degree one. We observe that for a metric graph $(G,g)$ with at least one pendant, it is impossible to immerse $(G,g)$ into any sphere via an isometric minimal immersion, since the unit tangent vector at the pendant on the net does not vanish. 

\begin{figure}[h]
\begin{tikzpicture}[scale=1]
	\tkzSetUpPoint[fill=white]
	\tkzDefPoint(0,0){v1}
	\tkzDefPoint(2,0){v2}
	\tkzDefPoint(2,2){v3}
	\tkzDefPoint(0,2){v4}
	\tkzDefPoint(-4,0){u1}
	\tkzDefPoint(-4,1){u2}

	\tkzDrawPoints(v1,v2,v3,v4,u1,u2)
	\tkzDrawSegments(v1,v2 v2,v3 v3,v4 v4,v1 u1,v1 u2,v1)
\end{tikzpicture}
    \caption{A graph with pendants}
\end{figure}

Therefore, there are no extremal pairs for the eigenvalue problem with a density function. Moreover, if $g_*$ is an extremal metric for $\ol{\lambda_k}(\cdot)$, then clearly $4\lambda_k(g_*)$ is an eigenvalue, according to Theorem \ref{main_theorem1} and Proposition \ref{4lambda_theorem}.

\subsection{Pumpkin graphs}

A pumpkin graph---also called mandarin or watermelon---is a graph with two vertices and no loop; i.e. all edges have these two vertices as the endpoints (see Figure \ref{mandarin}). For pumpkin graphs with two edges, we have $\ol{\lambda_1}(g)=4\pi^2$ for all metrics $g$. Thus, all metrics are extremal, and we can easily immerse the metric graph into any circle. 

For any pumpkin graph $G$ with at least $3$ edges, by \cite[Theorem 4.2]{Kennedy16}, regular metrics are unique maximisers for the functional $\ol{\lambda_1}(\cdot)$. Let $g=\pi^2 g_*$, then a simple calculation shows that $\lambda_1(g)=1$. We now show that it is possible to immerse $(G,g)$ into $\mbb{S}^2$ by constructing a geodesic net on $\mbb{S}^2$ as follows:
\begin{enumerate}
    \item Let $N:=(0,0,1)$ and $S:=(0,0,-1)$.
    \item On the circle $\mbb{S}^2\cap\{z=0\}$, consider a regular polygon $P_1,P_2,\dots,P_{|E|}$.
    \item For each $i\in\{1,\dots,|E|\}$, construct a geodesic $\gamma_i$ starting on $N$, passing through $P_i$ and ending at $S$.
\end{enumerate}

Then, the geodesic net $\Gamma$ formed by $\{\gamma_i\}_{i=1}^{|E|}$ and $\mathcal{V}=\{N,S\}$ is an immersion of $(G,g)$ on $\mbb{S}^2$ via some isometric maps constructed by $\lambda_1(g)$-eigenfunctions.

\begin{figure}[h]
    \centering
    \begin{subfigure}[b]{0.45\textwidth}
    	\centering
         \begin{tikzpicture}[scale=5]
	\tkzSetUpPoint[fill=white]
	
	\tkzDefPoint(0,0){o}
	\tkzDefPoint(1,0){o1}
	\tkzDefMidPoint(o,o1)\tkzGetPoint{m}
	\tkzDefPoint(0.5,0.25){c1}
	\tkzDefPoint(0.5,-0.25){c2}
	\tkzDrawArc(c1,o)(o1)
	\tkzDrawArc(c2,o1)(o)
	\tkzDefMidArc(c1,o,o1)\tkzGetPoint{m1}
	\tkzDefMidArc(c2,o1,o)\tkzGetPoint{m2}

	\tkzDrawSegments(o,o1)
	\tkzDrawPoints(o,o1)
	
\end{tikzpicture}
    \end{subfigure}
    \hfill
    \begin{subfigure}[b]{0.45\textwidth}
    	\centering
    \begin{tikzpicture}
	[scale=2,
		tdplot_main_coords]

	\coordinate (O) at (0,0,0);
 \shade[ball color = lightgray,
    opacity = 0.5
] (0,0,0) circle (1cm);

 	\tdplotsetthetaplanecoords{0}
		
		\tdplotdrawarc[tdplot_rotated_coords]{(O)}{1}{0}{120}{}{}
   	\tdplotsetthetaplanecoords{0}
		
		\tdplotdrawarc[dashed,tdplot_rotated_coords]{(O)}{1}{120}{180}{}{}

	\tdplotsetthetaplanecoords{120}
		
		\tdplotdrawarc[tdplot_rotated_coords]{(O)}{1}{0}{90}{}{}
	\tdplotsetthetaplanecoords{120}
		
		\tdplotdrawarc[dashed,tdplot_rotated_coords]{(O)}{1}{90}{180}{}{}
  
  	\tdplotsetthetaplanecoords{240}
		
		\tdplotdrawarc[tdplot_rotated_coords]{(O)}{1}{0}{60}{}{}

  	\tdplotsetthetaplanecoords{240}
		
		\tdplotdrawarc[dashed,tdplot_rotated_coords]{(O)}{1}{60}{180}{}{}

\draw[fill = white!50] (0,0,1) circle (1pt);
\draw[fill = white!50] (0,0,-1) circle (1pt);

\end{tikzpicture}

\end{subfigure}
	    \caption{A regular pumpkin graph with three edges and its corresponding geodesic nets on $\mbb{S}^2$ via $\lambda_1$-eigenfunctions.}
	\label{mandarin}
\end{figure}

\subsection{Flower graphs}\label{flower}
Recall that a flower graph is a graph with one vertex and all edges are loops (see Figure \ref{flower_picture}). In \cite[Corollary 2.8]{Band_optimiser}, Band and Lévy proved that regular metrics are unique maximisers for the functional $\ol{\lambda_1}(\cdot)$. Observe that if $G$ is a single loop, then all metrics on $G$ are regular, and we can map $G$ to any sphere easily.

Now, we consider a flower graph $G$ with at least two edges. Let $g$ be a regular metric on $G$ and $\ell$ be the common edge length. Then, a simple calculation shows that $\lambda_1(g)=(\pi/\ell)^2$. We show that it is impossible to immerse $(G,g)$ into any sphere $\mbb{S}^m$ via an isometric minimal immersion constructed by $\lambda_1(g)$-eigenfunctions. 

Indeed, suppose that there exists an isometric immersion $F:(G,g)\to\mbb{S}^{m}$ for some $m\in\mbb{N}$ and $F=(f_1,\dots,f_{m+1})$ such that $f_1,\dots,f_{m+1}\in E(\lambda_1(g))$. Then, Takahashi's Theorem implies that $\lambda_1=1$ and $\ell=\pi$. Observe that for each edge $e\in E$, $F(e)$ must be a multiple of some complete geodesics on $\mbb{S}^m$. Hence $|F(e)|=2n\pi$ for some $n\in\mbb{N}$, so that $\ell=2n\pi$ since $F$ is an isometric immersion, which is a contradiction. 

\begin{corollary}\label{no_maximiser}
	Let $G$ be a flower graph with $|E|\ge 2$ and $\alpha\notin\{1/2,1\}$. Then the functional $\ol{\lambda_1}^{(\alpha)}(\cdot,\cdot)$ does not have any extremal pair. Consequently, $\ol{\lambda_1}^{(\alpha)}(\cdot,\cdot)$ does not have any minimiser.
	\end{corollary}

\subsection{Necklace graphs}
A necklace graph is a graph $G=(V,E)$ with $V=\{v_1,v_2,\dots,v_n\}$ and $|E|=2(n-1)$, such that for every adjacent pair of vertices $v_j,v_{j+1}$, there are exactly two edges connecting them. A metric $g$ of $G$ is called symmetric if every pair of parallel edges connecting two vertices have the same length. By \cite[Theorem 2.1]{Band_optimiser}, symmetric metrics are unique minimisers of the functional $\ol{\lambda_1}(\cdot)$ on necklace graphs. 

\begin{figure}[h]
	
	\centering
	\begin{tikzpicture}[scale=1]
	\tkzInit[xmin = -4.25, xmax=4.25, ymin=-2, ymax=2]
	\tkzSetUpPoint[fill=white]
	\tkzDefPoint(-4,0){v1}
	\tkzDefPoint(-1,0){v2}
	\tkzDefPoint(1,0){v3}
	\tkzDefPoint(5,0){v4}
	\tkzDefPoint(-2.5,0){u1}
	\tkzDefPoint(-0,0){u2}
	\tkzDefPoint(3,0){u3}
	\tkzDefPoint(-2.5,1.5){a1}
	\tkzDefPoint(-2.5,-1.5){b1}
	\tkzDefPoint(0,1){a2}
	\tkzDefPoint(0,-1){b2}
	\tkzDefPoint(3,2){a3}
	\tkzDefPoint(3,-2){b3}
	\tkzDrawCircles(u1,v1 u2,v2 u3,v3)

	\tkzDrawPoints(v1,v2,v3,v4)
	\tkzLabelPoint[above](a1){$\ell_1$}
	\tkzLabelPoint(b1){$\ell_1$}
	\tkzLabelPoint[above](a2){$\ell_2$}
	\tkzLabelPoint(b2){$\ell_2$}
	\tkzLabelPoint[above](a3){$\ell_3$}
	\tkzLabelPoint(b3){$\ell_3$}
	\tkzLabelPoint[below left](v1){$v_1$}
	\tkzLabelPoint[below left](v2){$v_2$}
	\tkzLabelPoint[below left](v3){$v_3$}
	\tkzLabelPoint[below left](v4){$v_4$}
\end{tikzpicture}
	\caption{A symmetric necklace with 4 vertices}
	
\end{figure}
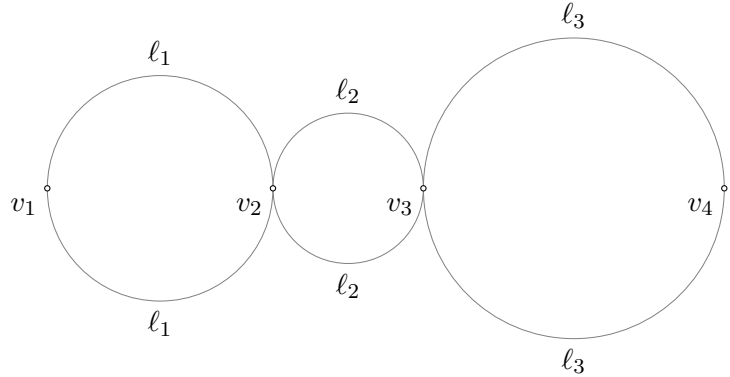

Suppose that $|E|\ge 3$ and let $g$ be a symmetric metric on $G$ with length $\ell_j$ to be the length of edges connecting $v_j$ and $v_{j+1}$. A simple calculation shows that:
\begin{equation}
    \lambda_1(g)=\left(\frac{\pi}{\ell_1+\cdots+\ell_{n-1}}\right)^2.
\end{equation}

We show that it is impossible to immerse $(G,g)$ into any sphere $\mbb{S}^m$ via an isometric minimal immersion constructed by $\lambda_1(g)$-eigenfunctions. Indeed, suppose that there exists an isometric immersion $F:(G,g)\to\mbb{S}^{m}$ for some $m\in\mbb{N}$ and $F=(f_1,\dots,f_{m+1})$ such that $f_1,\dots,f_{m+1}$ are $\lambda_1(g)$-eigenfunctions. Then, we have $\lambda_1(g)=1$ so that $\sum\ell_j=\pi$. Let $e_j$ and $\tilde e_j$ be edges connecting $v_{j}$ and $v_{j+1}$ and let $\gamma_j=F(e_j),\tilde\gamma_j=F(\tilde e_j)$. Then, $\gamma_1$ and $\tilde\gamma_1$ are in the same plane since the sum of their tangent vectors at $F(v_1)$ vanishes. Therefore, $\gamma_1\cup\tilde{\gamma}_1$ is a multiple of $\mbb{S}^1$ so that there exists $c_1\in\mbb{N}$ such that $\ell_1=c_1\pi$, which is a contradiction.

Hence, we cannot map symmetric necklaces to geodesic nets via isometric minimal immersions. The following corollary follows directly from the unique minimiser for the functional $\ol{\lambda_1}(\cdot)$ on necklace graphs. 

\begin{corollary}
	Let $G$ be a necklace graph with $|V|\ge 3$, then there is no minimiser for the functional $\ol{\lambda_1}^{(\alpha)}(\cdot,\cdot)$ for all $\alpha\ne 1$. Moreover, if $\alpha\notin\{1/2,1\}$, then there is no extremal pair for the functional $\ol{\lambda_1}^{(\alpha)}(\cdot,\cdot)$.
	\end{corollary}



\printbibliography
\end{document}